\declaretheorem[numberwithin=section]{theorem}
\declaretheorem[sibling=theorem]{proposition}
\declaretheorem[sibling=theorem]{lemma}
\declaretheorem[sibling=theorem]{corollary}
\declaretheorem[sibling=theorem,name=Definition,style=definition]{defn}
\declaretheorem[sibling=theorem,style=definition]{remark}
\def\E{\mathbb{E}}
\def\Z{\mathbb{Z}}
\def\R{\mathbb{R}}
\def\C{\mathbb{C}}
\def\N{\mathbb{N}}
\def\P{\mathbb{P}}
\def\F{\mathbb{F}}
\def\Ghat{\widehat{G}}
\def\fhat{\widehat{f}}
\providecommand{\abs}[1]{\lvert#1\rvert}
\providecommand{\Abs}[1]{\left\lvert#1\right\rvert}
\providecommand{\norm}[1]{\lVert #1 \rVert}
\providecommand{\tup}[1]{{\vec{#1}}}
\providecommand{\ceiling}[1]{\lceil#1\rceil}
\DeclareMathOperator{\Bohr}{Bohr}
\DeclareMathOperator{\vcdsym}{dim_{VC}}
\DeclareMathOperator{\vcdrsym}{dim_{rVC}}
\DeclarePairedDelimiterX{\relvcx}[2]{(}{)}{%
  #1\,\delimsize|\,#2%
}
\NewDocumentCommand{\vcd}{ >{\SplitArgument{1}{,}} m }{\vcdint#1}
\NewDocumentCommand{\vcdint}{mm}{%group version
\vcdsym\IfNoValueTF{#2}{(#1)}{\relvcx{#1}{#2}}%
}
\NewDocumentCommand{\vcdr}{ >{\SplitArgument{1}{,}} m }{\vcdrint#1}
\NewDocumentCommand{\vcdrint}{mm}{%group version
\vcdrsym\IfNoValueTF{#2}{(#1)}{\relvcx{#1}{#2}}%
}
\DeclareMathOperator{\boxspan}{boxspan}
\DeclareMathOperator{\supp}{supp}
\DeclareMathOperator{\ima}{Im}
\newcommand{\symmdiff}{\mathbin{\triangle}}
\begin{document}

\begin{frontmatter}[classification=text]
%% EDITOR: this will force the keywords to appear right after the Abstract.
%%   If the abstract is too long and would force the keywords off the
%%   front page, please comment out % [classification=text] above
%%   This way the keywords will be floated on the bottom of the first page
%%   even though the Abstract spills over to the next page.

%%% AUTHOR: Title goes here.  This line is optional.  You must use it
%%   if title has footnote attached or requires nontrivial typesetting,
%%   e.g., inclusion of linebreaks to force nice layout.
%\title{Short Proof of R\"odl's $n^{\log\log n}$ Bound\footnote{This is a footnote to the title}} %% please capitalize all significant words

%%% AUTHOR:
%%% List all authors. If you wish, place grant acknowledgements in \thanks.
%%% In brackets include a short tag for each author.
\author[olof]{Olof Sisask\thanks{Supported by Swedish Research Council grant 2013-4896}}

%%% AUTHOR: Abstract goes here
\begin{abstract}
We study a notion of VC-dimension for subsets of groups, defining this for a set $A$ to be the VC-dimension of the family $\{ (xA) \cap A : x \in A\cdot A^{-1} \}$. We show that if a finite subset $A$ of an abelian group has bounded VC-dimension, then the convolution $1_A*1_{-A}$ is Bohr uniformly continuous, in a quantitatively strong sense. This generalises and strengthens a version of the stable arithmetic regularity lemma of Terry and Wolf \cite{TeWo:2017} in various ways. In particular, it directly implies that the Polynomial Bogolyubov--Ruzsa Conjecture --- a strong version of the Polynomial Freiman--Ruzsa Conjecture --- holds for sets with bounded VC-dimension. We also prove some results in the non-abelian setting.

In some sense, this gives a structure theorem for translation-closed set systems with bounded (classical) VC-dimension: if a VC-bounded family of subsets of an abelian group is closed under translation, then each member has a simple description in terms of Bohr sets, up to a small error.
\end{abstract}
\end{frontmatter}

%%% AUTHOR: body of paper starts here
\section{Introduction}\label{section:intro}
There are many results in additive combinatorics that attempt to describe the algebraic structure of a set satisfying some combinatorial hypotheses. A popular and general type of hypothesis for a finite subset $A$ of an abelian group is that of small-doubling, taking the form $\abs{A+A} \leq K\abs{A}$, where $A+A \coloneqq \{ a + b : a, b \in A \}$ is the sumset of $A$ with itself. Powerful conclusions are known about such sets: it is known, for example, that in this case the four-fold sumset $4A = A+A+A+A$ must contain a coset of a large subgroup if the ambient group has bounded exponent, say --- such results are associated with the names of Bogolyubov and Ruzsa in the literature. Motivated by model-theoretic considerations, Terry and Wolf \cite{TeWo:2017} recently considered a different type of hypothesis, termed $k$-stability. We will define this in Section \ref{section:AR}, but for now we recall their main theorem, termed an \emph{arithmetic regularity lemma} due to its relation to a more general statement of this name due to Green \cite{Gr:2005}.

\begin{theorem}[Terry--Wolf stable arithmetic regularity]\label{thm:TW}
Let $\epsilon \in (0,1)$, let $k \geq 2$ and let $p$ be a prime. Suppose $G = \F_p^n$ with $n \geq n_0(\epsilon, k, p)$, and that $A \subset G$ is $k$-stable. Then there is a subspace $H \leq G$ of codimension at most $O_k\left(\epsilon^{-O_k(1)}\right)$ such that for each $x \in G$, either $\abs{A \cap (x+H)} \leq \epsilon \abs{H}$ or $\abs{A \cap (x+H)} \geq (1-\epsilon) \abs{H}$. In other words, each coset of $H$ is either almost disjoint from $A$ or almost contained in $A$. Moreover, there is a union $W$ of cosets of $H$ such that $\abs{ A \symmdiff W } \leq \epsilon \abs{G}$. 
\end{theorem}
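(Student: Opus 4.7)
My plan is to reduce Theorem~\ref{thm:TW} to the paper's main convolution uniform continuity result (as advertised in the abstract), which handles the more flexible hypothesis of bounded VC-dimension. The first step is to verify that $k$-stability of $A$ implies that the family $\{A \cap (x+A) : x \in A-A\}$ has VC-dimension bounded in terms of $k$ alone. This is classical model theory: any set of size $d$ shattered by this family produces, via a dyadic extraction, a ladder of length $\lfloor \log_2 d \rfloor$ inside the bipartite relation $\{(x,y) : x+y \in A\}$, so the absence of $k$-ladders (which is the content of $k$-stability) forces $d$ to be at most roughly $2^k$.

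With this in hand, I would invoke the paper's main theorem to conclude that $1_A * 1_{-A}$ is Bohr uniformly continuous in the strong quantitative form described in the abstract. In $\F_p^n$ Bohr sets coincide with cosets of subspaces, so this produces a subspace $H \leq G$ of codimension at most $O_k(\epsilon^{-O_k(1)})$ on whose translates $1_A * 1_{-A}$ varies by at most some small $\eta = \eta(\epsilon)\abs{A}$.

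Finally I would promote this analytic conclusion to the combinatorial dichotomy. Writing $\phi(x) = \abs{A \cap (x+H)}/\abs{H}$, the uniform continuity from the previous step controls $\phi$ on average, but does not pointwise prevent $\phi(x)$ from lying in $(\epsilon, 1-\epsilon)$. To exclude such intermediate-density cosets I would re-use $k$-stability directly: a coset with $\phi(x) \in (\epsilon, 1-\epsilon)$ supplies, by translating within $H$, many ordered pairs $(u,v)$ with $u \in A$ and $v \notin A$, and an iterative greedy extraction from these pairs yields a half-graph of length $k$ in the bipartite relation $\{(x,y) : x+y \in A\}$, contradicting stability. Defining $W$ as the union of the heavy cosets and summing the boundary deficits gives $\abs{A \symmdiff W} \leq \epsilon \abs{G}$.

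The principal obstacle is this last step. Bounded VC-dimension alone does not force the dichotomy --- a set can have VC-dimension $1$ and density exactly $1/2$ on every coset of a subspace --- so $k$-stability must be used in a genuinely combinatorial way beyond what drives the convolution uniform continuity. Executing the half-graph extraction from an intermediate-density coset cleanly, with the right quantitative dependence on $k$ and $\epsilon$, is the delicate part where the argument truly exploits $k$-stability rather than just VC-boundedness.
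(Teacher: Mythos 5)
First, a contextual point: Theorem~\ref{thm:TW} is not proved in this paper --- it is quoted verbatim from Terry and Wolf \cite{TeWo:2017} as motivation, and the paper's own machinery instead yields Theorem~\ref{thm:AR}, which has a similar quantitative shape but drops the per-coset dichotomy and only controls $\abs{A \symmdiff W}$ (with error $\epsilon\abs{A}$, not $\epsilon\abs{G}$). So there is no ``paper's own proof'' of this statement to compare against. Your first two steps do essentially reproduce the paper's route to Theorem~\ref{thm:AR}, with one inefficiency: the paper's lemma in Section~\ref{section:stability} shows directly that $k$-stability implies $\vcd(G,A) \leq k-1$ (take $a_i$ to witness the tail sets $\{b_i,\dots,b_k\}$ of a shattered $k$-set, giving the half-graph at full length), so no dyadic extraction is needed and the loss to $\approx 2^k$ is avoidable.

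The genuine gap is your third step, and to your credit you flag it as such. The paper itself observes, in the remark closing Section~\ref{section:stability}, that bounded VC-dimension cannot give the dichotomy (arithmetic progressions have cosets of medium density), and its machinery correspondingly stops at the symmetric-difference conclusion. Your proposed repair does not actually close this gap: a coset $x+H$ of intermediate density gives you many $u\in A$ and $v\notin A$ in the same coset, but the $k$-order property demands the highly structured monotone incidence pattern $a_i+b_j\in A$ iff $i\leq j$, and ``greedy extraction from mixed pairs'' does not by itself deliver that ordering --- it is precisely where all of the work must happen, and nothing in your sketch (or in the almost-periodicity of $\mu_A\circ 1_A$, which controls $y\mapsto\abs{A\cap(y+A)}$ rather than directly constraining individual coset densities beyond the bulk of $A$) produces the needed monotone ladder. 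Terry and Wolf obtain the dichotomy by a fundamentally different mechanism --- a Malliaris--Shelah-style stable regularity lemma for bipartite graphs, where the extreme-density pattern is built into the regular partition from the outset --- so ``prove Theorem~\ref{thm:AR} via convolutions and then patch the dichotomy afterwards'' is a natural thought but would require a substantively new argument that neither this paper nor your proposal supplies.
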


The point here is that the quantitative aspects of the conclusion are far, far stronger than what one obtains for similar statements without the $k$-stability condition, these being necessarily of tower-type with the height of the tower increasing with $1/\epsilon$, as proved by Green \cite{Gr:2005}, akin to the bound proved by Gowers \cite{Gowers:tower} in the graph regularity context; we refer the reader to \cite{TeWo:2017} for more background. We shall show that a similarly strong quantitative regularity conclusion holds if $A$ satisfies a weaker hypothesis than stability, and, moreover, we shall prove some more general results on the uniform continuity of certain convolutions. To state these results, we introduce our key definition: 
\begin{defn}
Let $G$ be a group and let $A, B \subset G$. We define the \emph{VC-dimension of $A$ relative to $B$} to be the VC-dimension of the family $\{ (x A) \cap B : x \in B \cdot A^{-1} \}$, and denote this by $\vcd{A,B}$. If $B = A$ we write just $\vcd{A}$ and call this the \emph{VC-dimension of $A$}.
\end{defn}

Here, the VC-dimension $\vcd{\mathcal{F}}$ of a family $\mathcal{F}$ of sets is a classical concept in statistics and theoretical computer science, named for Vapnik and Chervonenkis \cite{VC} and defined as the largest size of a set $X$ such that $\mathcal{F}$ \emph{shatters} $X$, ie for which every subset $Y \subset X$ can be written in the form $Y = X \cap F$ for some $F \in \mathcal{F}$. Also, $B \cdot A^{-1} \coloneqq \{ ba^{-1} : a \in A,\ b \in B \}$ is the non-abelian version of set addition/subtraction. We remark that $\vcd{A}$ is almost the same thing as the VC-dimension of the family $\{ tA : t \in G\}$ of all translates of $A$ (see Proposition \ref{prop:monotonicity}), but has somewhat nicer additive combinatorial properties.

We shall look at this concept of VC-dimension in more depth in Section \ref{section:VCprops}, where we prove that it for example is invariant under Freiman isomorphism, and that the commonly considered `nice' sets in additive combinatorics, such as subgroups, generalised arithmetic progressions and (more generally) Bohr sets, have small VC-dimension --- as do some other types of sets not usually considered in conjunction with these, such as Sidon sets.

Our first results provide partial converses to these assertions about examples. We start with a statement about the finite field case, as considered in Theorem \ref{thm:TW}:\footnote{See the end of this section for a word on notation, in particular on the use of the letter $C$.}

\begin{theorem}[{name=VC-bounded arithmetic regularity,restate=[name=]AR}]\label{thm:AR}
Let $\epsilon \in (0,1)$, and let $G = \F_q^n$. If $A \subset G$ has size least $\alpha \abs{G}$ and $\vcd{A} \leq d$, then there is subspace $H \subset A-A$ of codimension at most $C d \epsilon^{-C}\log(2/\alpha)$ and a union $W$ of cosets of $H$, contained in $A+H$, such that $\abs{ A \symmdiff W } \leq \epsilon \abs{A}$.
\end{theorem}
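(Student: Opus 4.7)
The plan is to deduce Theorem \ref{thm:AR} as a relatively direct consequence of the paper's main theorem on the uniform continuity of $1_A * 1_{-A}$, combined with a standard averaging argument converting uniform continuity into a coset regularity statement.

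First, I would invoke the main theorem (advertised in the abstract) for a suitably small parameter $\eta$, to be chosen as a constant multiple of $\epsilon^2$: since $\vcd(A) \leq d$ and $\abs{A} \geq \alpha \abs{G}$, I expect there to exist a subspace $H \leq G$ of codimension $O(d \eta^{-O(1)} \log(2/\alpha))$, contained in $A - A$, such that
\[
\abs{1_A * 1_{-A}(y) - 1_A * 1_{-A}(0)} \leq \eta \abs{A} \qquad \text{for all } y \in H.
\]
The quantitative features that matter here are the linear dependence of the codimension on $d$ and the polynomial dependence on $1/\eta$; these will presumably be the substantive content of the paper's main theorem. The inclusion $H \subset A - A$ is automatic, since $1_A * 1_{-A}$ is close to $\abs{A}$ on $H$ and thus in particular non-zero there.

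Second, I would translate this uniform continuity into a density statement. Using $1_A * 1_{-A}(0) = \abs{A}$ and $1_A * 1_{-A}(y) = \abs{A \cap (A+y)}$, the above estimate reads $\abs{A \cap (A+y)} \geq (1 - \eta) \abs{A}$ for every $y \in H$. Writing $\rho_C = \abs{A \cap C}/\abs{H}$ for the density of $A$ in each coset $C$ of $H$, summing the pointwise estimate over $y \in H$ and regrouping by cosets yields the second-moment bound
\[
\sum_C \rho_C (1 - \rho_C) \leq \eta \frac{\abs{A}}{\abs{H}}.
\]

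Third, I would take $W$ to be the union of those cosets $C$ with $\rho_C \geq 1 - \epsilon$; by construction $W \subset A + H$, since every coset in $W$ meets $A$. Two short Markov-type computations --- one using $\rho_C \leq \rho_C(1-\rho_C)/\epsilon$ for sparse cosets to bound $\abs{A \setminus W}$, and one using $1 - \rho_C \leq \rho_C(1-\rho_C)/(1-\epsilon)$ for dense cosets to bound $\abs{W \setminus A}$ --- together give $\abs{A \symmdiff W} \leq O(\eta/\epsilon) \abs{A}$. Choosing $\eta$ to be a small constant multiple of $\epsilon^2$ produces the claimed $\epsilon \abs{A}$ bound, with codimension $O(d \epsilon^{-C} \log(2/\alpha))$.

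The main obstacle is entirely in the first step, namely obtaining the uniform continuity of the convolution with the quantitative parameters required; everything after that is routine. The translation from uniform continuity of $1_A * 1_{-A}$ to a regularity conclusion is a classical device familiar from the use of Bogolyubov's lemma, and here it simply converts an $L^\infty$ statement about the convolution into a pointwise approximation of $A$ by a union of cosets.
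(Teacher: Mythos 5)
Your proposal is correct and follows essentially the same route as the paper: apply the $L^\infty$ almost-periodicity theorem (Theorem \ref{thm:mainAbelian}) with a parameter of size $\asymp \epsilon^2$ to obtain a subspace $H\subset A-A$ on which $1_A\circ 1_A$ is close to $\abs{A}$, then form $W$ by thresholding the local coset densities and control $\abs{A\symmdiff W}$ by a Markov argument. The paper's own packaging (via Theorem \ref{thm:ARBohr}) defines $A'$ by thresholding $1_A*\mu_H(a)$ and takes $W=A'+H$, which in $\F_q^n$ coincides with your union of dense cosets; your second-moment bookkeeping is marginally tighter but yields the same $\epsilon^{-O(1)}$ dependence.
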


As we show in Section \ref{section:stability}, having bounded VC-dimension is a strictly weaker hypothesis than that of stability: if $A$ is $k$-stable in the sense of Terry--Wolf then $\vcd{A} \leq k-1$, but there are sets of bounded VC-dimension that are not boundedly stable. The above theorem thus applies more generally than Theorem \ref{thm:TW}, but is weaker in not offering control on all the individual translates of $H$. Indeed, such control can fail in the bounded VC-dimension setup, but something like it can be obtained for a large collection of the possible $x+H$ --- see Theorem \ref{thm:ARBohr} and Remarks \ref{rmk:translateControl} and \ref{rmk:translateControlCPT}. We note that a density assumption like ours is implicit in Theorem \ref{thm:TW}, as the conclusion is trivial if $\epsilon \geq \abs{A}/\abs{G}$, taking $H = G$. Let us also remark that the unspecified exponent of $\epsilon^{-1}$ above is very reasonable: certainly one can take $C = 4+o(1)$ (but see Section \ref{section:aftermath} for improvements).

We shall in fact prove a more general result, valid for arbitrary finite abelian groups. For the definitions surrounding Bohr sets, see Section \ref{section:proofAbelian}; for a technically more complete statement, see Section \ref{section:AR}, and for improved bounds, see Section \ref{section:aftermath}.

\begin{theorem}[VC-bounded arithmetic regularity, simplified Bohr set version]\label{thm:ARBohrSimple}\hfill\ \\
Let $\epsilon \in (0,1)$, and let $G$ be a finite abelian group. If $A \subset G$ has size least $\alpha \abs{G}$ and $\vcd{A} \leq d$, then there is a Bohr set $H \subset A-A$ of rank $m \leq C d \epsilon^{-C}\log(2/\alpha)$ and radius at least $c\epsilon^2/m^2$, and a subset $A' \subset A$, such that $\abs{ A \symmdiff (A'+H) } \leq \epsilon \abs{A}$.\\
\end{theorem}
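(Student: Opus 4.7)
\emph{Proof plan.} My strategy is to derive this from the paper's main result (Theorem 1.1 of the abstract): the Bohr uniform continuity of $1_A * 1_{-A}$. Applied with tolerance $\eta$ to be calibrated polynomially in $\epsilon$, this produces a Bohr set $H^*$ of rank $m \lesssim d\eta^{-C}\log(2/\alpha)$ and radius $\rho^* \gtrsim \eta^{O(1)}/m^{O(1)}$ satisfying
\[
\|T_h(1_A * 1_{-A}) - 1_A * 1_{-A}\|_\infty \leq \eta|A| \quad \text{for all } h \in H^*.
\]
Evaluating at $x=0$ gives $|A \cap (A+h)| \geq (1-\eta)|A| \geq 1$ for $h \in H^*$, so $H^* \subset \supp(1_A*1_{-A}) = A - A$. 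The theorem's Bohr set $H$ will be a slight shrinking of $H^*$ of the same rank, with the radius halved, chosen so that $H + H \subset H^*$.

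Next I would translate the pointwise continuity into $L^2$ information about the $H$-smoothing $F := 1_A * \mu_H \in [0,1]$. Convolving $1_A * 1_{-A}$ with $\mu_H * \mu_{-H}$ (supported on $H - H \subset H^*$) and evaluating at $0$ gives
\[
\|F\|_2^2 = (1_A * 1_{-A} * \mu_H * \mu_{-H})(0) \geq (1-2\eta)|A|,
\]
hence $\sum_x F(x)(1-F(x)) \leq 2\eta|A|$. A single-convolution version yields $\sum_{x \notin A} F(x) = |A| - \langle 1_A, F\rangle \leq \eta|A|$. Markov applied to these bounds shows the level set $B := \{x : F(x) \geq 1/2\}$ satisfies $|A \symmdiff B| = O(\eta|A|)$.

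I would then take $A' := A \cap B$. The side $A \setminus (A'+H) \subset A \setminus B$ is immediately $O(\eta|A|)$-small. For the other side, $(A'+H) \setminus A \subset ((B+H) \setminus B) \cup (B \setminus A)$, the second piece is already controlled, and the first requires showing $B$ is approximately $H$-invariant. This in turn needs pointwise bounds on $F - T_h F$ for $h \in H$; exploiting $H + H \subset H^*$, one replaces $F$ by the further-smoothed $1_A * \mu_H * \mu_H$ and reads off $\ell^\infty$ $H$-continuity directly from the main theorem applied on $H^*$. Calibrating $\eta$ as a suitable polynomial in $\epsilon$ then yields $|A \symmdiff (A'+H)| \leq \epsilon|A|$.

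The main obstacle is the approximate $H$-invariance step. A naive pointwise Cauchy--Schwarz gives only $|F(x) - F(x-h)| \lesssim \sqrt{\eta|A|/|H|}$, which fails when $|H|$ is small compared to $|A|$ — an essentially unavoidable regime given the quantitative parameters of the main theorem. The resolution is the nested-Bohr-set trick described above: do all structural analysis on the inner $H$ with $H+H \subset H^*$, and use double-smoothing to lift the $\ell^\infty$-continuity of $1_A*1_{-A}$ on $H^*$ into usable pointwise control of $F$ on $H$. This is the key technical step and dictates the precise relation between the rank and radius of $H$ in the final statement; apart from it, the proof is a direct corollary of Theorem 1.1 combined with standard $L^2$ and level-set bookkeeping.
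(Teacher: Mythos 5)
Your overall skeleton matches the paper's: apply the abelian almost-periodicity theorem (Theorem~\ref{thm:mainAbelian}) with $B=A$, observe that $\mu_A\circ 1_A(0)=1$ forces the resulting Bohr set of almost-periods into $A-A$, then smooth $1_A$ by a Bohr set and take $A'$ to be a level set. Where you diverge is the choice of what to smooth by, and that is where your argument has a genuine gap.

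The paper sets $F_T := 1_A * \mu_T$, where $T$ is the \emph{full} Bohr set of almost-periods supplied by Theorem~\ref{thm:mainAbelian}, defines $A' := \{a\in A : F_T(a)\ge 1-\delta^{1/2}\}$, and then needs $\|\tau_h F_T - F_T\|_\infty$ to be small for $h\in H$. This is achieved by taking $H=T_\tau$ a \emph{much} smaller dilate of $T$, of relative radius $\tau\sim \delta^{1/2}/m$, so that Bohr-set regularity (Lemma~\ref{lemma:regConv}) gives $\sum_x|\mu_T(x+h)-\mu_T(x)|\le\delta^{1/2}$; the $\ell^\infty$ control on $1_A*\mu_T$ then follows trivially from $|1_A|\le 1$. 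This is why the final radius is $\sim\epsilon^2/m^2$ rather than $\sim\epsilon/m$. Your proposal instead smooths by the \emph{small} Bohr set, $F:=1_A*\mu_H$ with $H$ only a radius-halved copy of $H^*$, and defers the needed $\ell^\infty$ control of $F$ under $H$-translation to ``double-smoothing'' $F':=1_A*\mu_H*\mu_H$, claiming this can be ``read off'' from the $\ell^\infty$-continuity of $1_A*1_{-A}$ on $H^*$. I don't see how this step goes through. The main theorem controls translates of the \emph{two-fold} object $1_A*1_{-A}$ pointwise; that gives you $L^2$ information about $F$ and $F'$ (via inner products like $\langle F,\tau_h F\rangle = (1_A*1_{-A}*\mu_H*\mu_{-H})(-h)$, which you use correctly for the level-set bounds), but it does \emph{not} give pointwise control of $F'(x+h)-F'(x)$. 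Indeed, as you note, pointwise control via Cauchy--Schwarz requires a factor $1/|H|$, and radius-halving provides no independent smoothing gain — you'd only get $\|\tau_h\mu_H-\mu_H\|_1=O(1)$, not $o(1)$. The $L^2$ bounds alone cannot bound $|(A'+H)\setminus A|$, because a single ``bad'' shift can break $H$-invariance of the level set $B$ even when the $L^2$ discrepancy is small for each fixed $h$.

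To repair the argument, replace $F$ by $1_A*\mu_T$ with $T$ the Bohr set produced by the main theorem, and take $H$ to be a much smaller dilate $T_\tau$ with $\tau\asymp\delta^{1/2}/m$ (not merely radius-halved). Then Lemma~\ref{lemma:regConv} supplies exactly the pointwise $H$-invariance you flagged as the obstacle, and the rest of your level-set bookkeeping can proceed; you will recover the radius $c\epsilon^2/m^2$ in the statement (your proposal implicitly promises a larger radius $\sim\epsilon/m$, which the corrected argument does not deliver). Also note the paper proves the stronger Theorem~\ref{thm:ARBohr} (with the extra parameter $\nu$ controlling how much $H$ can be shrunk) and obtains Theorem~\ref{thm:ARBohrSimple} as an instance.
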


One way of interpreting this result is as a structure theorem for translation-closed set families with bounded VC-dimension: if a set system $\mathcal{F}$ consists of subsets of an abelian group and has bounded VC-dimension, and the translates of each set in the system also lie in the system, then each of the members of $\mathcal{F}$ can be efficiently covered by translates of Bohr sets. Thus, the prototypical examples of set systems with small VC-dimension --- intervals in $\R$ and boxes in $\R^d$ (see Section \ref{section:VCprops}) --- are in some sense canonical.

A group-theoretic definition of VC-dimension very similar to ours was used in a paper of P. Simon \cite{Sim:2017} and was also arrived at from a graph-theoretic perspective by Alon, Fox and Zhao \cite{AFZ:2018}, who independently from this work proved a result along the lines of Theorem \ref{thm:AR}. The approach in \cite{AFZ:2018} has the advantage of leading to a better dependence on $\epsilon$ in the bounded exponent setting, with a logarithmic dependence instead of polynomial, whereas the approach of this paper works for arbitrary finite abelian groups (and is different even in the bounded exponent setting). A lemma from \cite{AFZ:2018} can however be used to improve the dependence on $\epsilon$ in Theorem \ref{thm:ARBohrSimple} to logarithmic; we elaborate on this in Section \ref{section:aftermath}. A further different approach to this theory was given independently around the same time as this work by Conant--Pillay--Terry \cite{CoPiTe:2018}, who used model-theoretic tools to prove a very general result along the lines of Theorems \ref{thm:ARBohrSimple} and \ref{thm:ARBohr}, valid for all (not necessarily abelian) groups, albeit with ineffective bounds; see \cite[Theorem 5.7]{CoPiTe:2018}. An earlier paper \cite{CoPiTe:2017} of the same authors proves an extension of Theorem \ref{thm:TW} to arbitrary groups; we make some further remarks on this in Section \ref{section:stability}.

We shall deduce the above arithmetic regularity lemmas from results on the continuity of convolutions, which are the natural results to which our methods lead. To state these results properly, let us introduce some notation.

For a group $G$ and functions $f, g : G \to \C$, we define their (left) convolution to be
\[ f*g(x) = \sum_{y \in G} f(y)g(y^{-1}x), \]
provided this is well-defined. Denoting the indicator function of a set $X$ by $1_X$, a close connection between our definition of VC-dimension and convolution comes from the fact that $1_B*1_{A^{-1}}(x) = \abs{ (xA) \cap B }$. Note in particular that $B \cdot A^{-1} = \supp(1_B * 1_{A^{-1}})$. It will be convenient to normalise certain sums, and for this purpose we write $\mu_A = 1_A/\abs{A}$ for finite sets $A$. We also extend this to define measures: $\mu_A(X) \coloneqq \abs{A \cap X}/\abs{A}$. We furthermore define the translation operator $\tau_t$ for $t \in G$ by $\tau_t f(x) = f(tx)$.

It is well-known that convolutions of indicator functions (and more general functions) are somewhat smooth, particularly if the sets satisfy some combinatorial condition like small doubling; see for example \cite{Bo:1990, CrSi:2010} or \cite[Theorems 4.1, 6.1]{CrLaSi:2013}, where a notion of $L^p$-smoothness is proved for $p \geq 2$, or \cite[Theorems 5.1, 5.4]{ScSi:2016}, where $L^\infty$-smoothness is proved for convolutions of three sets. Our main results say that if a set has bounded VC-dimension, then one in fact has $L^\infty$-smoothness, ie uniform continuity --- or uniform almost-periodicity --- even for a convolution of two sets. Our first such result is valid for arbitrary groups:
{
\begin{theorem}\label{thm:mainSum}
Let $\epsilon \in (0,1]$ and $d, k \in \N$. Let $G$ be a group and let $A, B \subset G$ be finite subsets with $\vcd{A,B} \leq d$. If $\abs{S \cdot B} \leq K\abs{B}$ for some set $S \subset G$, then there is a set $T \subset S$ of size at least $0.99 K^{-C dk^2/\epsilon^2} \abs{S}$ such that, for each $t \in (T^{-1} T)^k$,
\[ \norm{ \tau_t(\mu_B* 1_{A^{-1}}) - \mu_B* 1_{A^{-1}}}_\infty \leq \epsilon. \]
\end{theorem}
}
Here $X^k$ denotes $X \cdot X \cdots X$ with $k$ copies of $X$. 

We note that if $G$ is finite and $B$ has size at least $\alpha \abs{G}$, then one can take $S = G$ and $K = 1/\alpha$. Similarly, if $B \subset [N] \coloneqq \{1,2,\ldots,N \} \subset \Z$ has $\abs{B} \geq \alpha N$, then one can take $S = [N]$ and $K = 2/\alpha$. This small sumset condition is thus a generalisation of a density condition.

In other words, if $A$ has bounded VC-dimension relative to a set of small doubling, then, averaged or `projected' over this set, $A$ is almost translation-invariant.

For abelian groups, we can impose some structure on the set of almost-periods, ensuring uniform continuity in the so-called Bohr topology (in a quantitatively strong sense); we again refer to Section \ref{section:proofAbelian} for the definitions of the terms involved.

\begin{theorem}\label{thm:mainAbelian}
Let $\epsilon \in (0,1]$ and $d \in \N$. Let $G$ be a finite abelian group and let $A, B \subset G$ be subsets with $\vcd{A,B} \leq d$ and $\eta \coloneqq \abs{B}/\abs{A}$. % \leq 2\epsilon^{-2}$; not needed, as result trivial otherwise.
 If $\abs{B + S} \leq K\abs{B}$ for some set $S \subset G$, then there is a regular Bohr set $T$ of rank $m \leq C d \epsilon^{-2} (\log K) (\log 2/\epsilon\eta)^2 + C\log(2\abs{G}/\abs{S})$ and radius at least $c\epsilon \eta^{1/2}/m$ such that, for each $t \in T$,
\[ \norm{\mu_B* 1_{-A}(\cdot + t) - \mu_B* 1_{-A}}_\infty \leq \epsilon. \]
\end{theorem}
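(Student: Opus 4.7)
The plan is to deduce Theorem~\ref{thm:mainAbelian} from Theorem~\ref{thm:mainSum} by promoting the dense set of $L^\infty$-almost-periods provided by the latter to a regular Bohr set, using the abelian group structure of $G$. First apply Theorem~\ref{thm:mainSum} with a parameter $k \geq 2$ to be chosen and with $\epsilon/2$ in place of $\epsilon$; this yields a subset $T \subset S$ with $\abs{T}/\abs{S} \geq 0.99 K^{-Cdk^2/\epsilon^2}$ such that every element of $(T^{-1}T)^k = kT - kT$ (in additive notation) is an $(\epsilon/2)$-almost-period of $\mu_A \circ 1_B$ in $L^\infty$. In particular, the density of $T$ in $G$ is $\sigma \geq 0.99 K^{-Cdk^2/\epsilon^2}\mu_G(S)$.

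Next, I would extract a regular Bohr set contained in $2T - 2T \subset kT - kT$ (the inclusion using $k \geq 2$). This is a Bogolyubov--Chang-type statement valid in any finite abelian group: for a set $U \subset G$ of density $\sigma$, the sumset $2U - 2U$ contains a Bohr set of rank $O(\log(1/\sigma))$ and radius bounded below by a positive absolute constant. Applied to $T$, this produces a Bohr set $H$ of rank $m = O(dk^2\epsilon^{-2}\log K + \log(1/\mu_G(S)))$. Regularity is then arranged in the standard way (pigeonholing over a range of radii costs only an absolute factor). Since $H \subset kT - kT$, each $t \in H$ is an $(\epsilon/2)$-almost-period, and hence in particular an $\epsilon$-almost-period. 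Shrinking the radius further to $c\epsilon\eta^{1/2}/m$ preserves both regularity and almost-periodicity. Choosing $k$ of order $\log(2/\epsilon\eta)$ balances the estimates and yields the stated bound $m \leq Cd\epsilon^{-2}(\log K)(\log(2/\epsilon\eta))^2 + C\log(2/\mu_G(S))$.

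The main technical obstacle is the Bogolyubov--Chang extraction step, and in particular pinning down why $k$ should be taken as large as $\log(2/\epsilon\eta)$---a naive reading of the above suggests $k=2$ would already give a better rank bound (without the $(\log(2/\epsilon\eta))^2$ factor). The larger $k$ presumably is forced by a subtler interaction between regularity and the specific radius $c\epsilon\eta^{1/2}/m$ demanded in the conclusion, requiring extra room in the sumset to accommodate the Bohr extraction. An alternative route would avoid Theorem~\ref{thm:mainSum} altogether: apply VC uniform convergence once (to the family $\{A \cap (x+B) : x \in A - B\}$ of VC-dimension at most $d$) to approximate $\mu_A \circ 1_B$ uniformly by an average $f(x) = \frac{1}{s}\sum_{i=1}^s 1_B(a_i - x)$ of $s = O(d\epsilon^{-2}\log(2/\epsilon\eta))$ translates of $1_B$, then use Fourier structure (e.g.\ Chang's theorem on $\Spec(1_B)$) together with a second VC-type control on the cancellations in $\tau_t f - f$ to find a Bohr set of common almost-periods for $f$. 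In either approach, the core challenge is balancing the sampling parameters against the Bohr-set rank/radius trade-off.
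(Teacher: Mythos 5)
Your overall strategy---apply \Cref{thm:mainSum} to get a dense set $T$ of almost-periods and then upgrade to a Bohr set---is the right high-level plan, but the Bohr-extraction step you propose is where the argument breaks, and this is precisely the issue you flag yourself (``a naive reading suggests $k=2$ would already give a better rank bound'').

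The concrete gap: the ``Bogolyubov--Chang-type statement'' you invoke, that for a set $U \subset G$ of density $\sigma$ the set $2U-2U$ contains a Bohr set of rank $O(\log(1/\sigma))$ and radius a positive absolute constant, is not a standard fact, and in the naive form it is false. The classical Bogolyubov argument gives a Bohr set on the spectrum $\Gamma = \{\gamma : \abs{\widehat{\mu_U}(\gamma)} \geq \sqrt{\sigma}/2\}$, with $\abs{\Gamma} \leq 4\sigma^{-2}$. Applying Chang's lemma at threshold $\sqrt{\sigma}/2$ improves the rank to $O(\sigma^{-1}\log(1/\sigma))$, which for $\sigma \approx K^{-Cdk^2/\epsilon^2}\mu_G(S)$ is astronomically large, not $O(\log(1/\sigma))$. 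Reducing the rank to polylogarithmic in $1/\sigma$ requires Sanders' Bogolyubov--Ruzsa machinery, which is far heavier, would not deliver constant radius, and would give a worse power of the logarithm. So the route ``find $H \subset kT - kT$ directly'' does not close the argument with the claimed bounds, and taking $k=2$ does not give the ``better'' rank bound you suspect.

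The paper does something genuinely different: the final Bohr set is \emph{not} placed inside $kT-kT$, and containment is never used. Instead, one convolves $\mu_A\circ 1_B$ with $\mu = \mu_T^{(k)}\circ\mu_T^{(k)}$, so that after a triangle inequality one only needs to show that $\mu_A\circ 1_B * \mu$ is Bohr-continuous. This is done by the Fourier expansion $\abs{(\mu_A\circ 1_B * \mu)(x+t) - (\mu_A\circ 1_B * \mu)(x)} \leq \E_\gamma \abs{\widehat{\mu_A}}\abs{\widehat{1_B}}\abs{\widehat{\mu_T}}^{2k}\abs{\gamma(t)-1}$. The Bohr set is built on the threshold-$1/2$ spectrum $\Gamma$ of $\mu_T$; Chang's lemma at threshold $1/2$ is exactly what gives the rank $m \leq C\log(2/\mu_G(T))$ (this is where the logarithmic dependence on $1/\sigma$ comes from, with no $\sigma^{-1}$ loss). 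For $\gamma\notin\Gamma$ the factor $\abs{\widehat{\mu_T}}^{2k} < 2^{-2k}$ kills the tail, and since by Cauchy--Schwarz and Parseval $\E_\gamma\abs{\widehat{\mu_A}}\abs{\widehat{1_B}} \leq \eta^{-1/2}$, one is forced to take $k \gtrsim \log(2/\epsilon\eta^{1/2})$ to make the tail $\lesssim \epsilon$. That is the entire reason $k$ is large: not to have room for a Bogolyubov extraction inside $kT-kT$, but to damp the small Fourier coefficients of $\mu_T$ that the Bohr set does not control. Your second, alternative sketch gestures in a more promising direction (approximate and then Fourier-analyze the approximant), but as written it leaves the decisive step---finding common almost-periods of the sampled average $f$ with a Bohr set of small rank---unresolved; the paper's $\mu_T^{(k)}$ device is exactly the mechanism that does this job.
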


We again refer to Section \ref{section:aftermath} for an improved $\epsilon$-dependence in variants of the above theorems, in the light of a lemma from \cite{AFZ:2018}.

Taking $B = A$ and $\epsilon = \tfrac{1}{2}$, and using the fact that $\mu_A* 1_{-A}(0) = 1$ and $\supp(\mu_A*1_{-A}) = A-A$, Theorem \ref{thm:mainAbelian} has as an essentially immediate consequence the following Bogolyubov--Ruzsa-type corollary, which we restrict to the finite field setup for clarity of exposition. We employ here some non-standard terminology: 
\begin{defn}
We say that a set $H \subset \F_q^n$ has \emph{$A$-codimension at most $d$} if $\abs{H} \geq q^{-d} \abs{A}$.
\end{defn}

\begin{corollary}[{restate=[name=]PBR}]\label{cor:PBR}
Let $A \subset \F_q^n$ be a set with $\vcd{A} \leq d$ and $\abs{A-A} \leq K \abs{A}$. Then $A-A$ contains a subspace of $(A-A)$-codimension at most $C d\, \log K + C \log q$.
\end{corollary}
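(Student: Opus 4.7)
The plan is to derive the corollary as a near-immediate consequence of Theorem \ref{thm:mainAbelian}. I will apply that theorem with $B = A$, $\epsilon = 1/2$, and $\eta = 1$, together with a carefully chosen auxiliary set $S$; the natural candidate is $S = A-A$, since the doubling hypothesis together with Pl\"unnecke--Ruzsa yields $\abs{A + S} = \abs{2A - A} \leq K^{O(1)} \abs{A}$, so the theorem applies with $K_S \leq K^{O(1)}$. This produces a regular Bohr set $T$ of some rank $m$ on which $\norm{\mu_A \circ 1_A(\cdot + t) - \mu_A \circ 1_A}_\infty \leq 1/2$.

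The crux of the argument is the trivial evaluation $\mu_A \circ 1_A(0) = \abs{A}/\abs{A} = 1$: evaluating the $L^\infty$ bound at the origin forces $\mu_A \circ 1_A(t) \geq 1/2 > 0$ for every $t \in T$, and since $\supp(\mu_A \circ 1_A) = A - A$, this immediately yields $T \subset A - A$.

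To convert the Bohr set into an honest subspace, I will use the standard fact that in $\F_q^n$ (say for $q$ prime), any Bohr set $B(\Gamma, \rho)$ contains the subspace $H = \bigcap_{\xi \in \Gamma} \ker \xi$ of codimension at most $\abs{\Gamma} \leq m$, independent of the value of the radius $\rho$. Hence $H \subset T \subset A - A$ with $\abs{H} \geq q^{n - m}$, so the $(A-A)$-codimension of $H$ is at most $m - \log_q(\abs{G}/\abs{A-A})$.

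The main delicate step is the final bookkeeping, converting this into the stated bound $Cd \log K + C \log q$. The rank bound from Theorem \ref{thm:mainAbelian} involves natural logarithms, while $(A-A)$-codimension is measured with $\log_q$, and the $C \log q$ factor arises essentially as a base-change error between these two conventions. Ensuring the $\log(\abs{G}/\abs{A-A})$ contributions cancel cleanly will probably require a careful choice of $S$---possibly after first passing to the subspace generated by $A-A$ as the ambient group, so that $A-A$ has essentially full density there---after which the corollary follows.
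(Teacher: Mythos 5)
Your outline has the right start (apply the main almost-periodicity theorem with $\epsilon=1/2$, use $\mu_A\circ 1_A(0)=1$ to deduce $T\subset A-A$, extract a subspace via \Cref{lemma:BohrStructure}), but there is a genuine gap in controlling the rank. Applying \Cref{thm:mainAbelian} inside the original ambient group $\F_q^n$ (or even inside the span of $A-A$) leaves the $C\log(2/\mu_G(S))$ term in the rank bound uncontrolled: small doubling does \emph{not} imply that $A-A$ is dense in the subspace it generates. For instance, $A=\{0,e_1,\dots,e_n\}\subset\F_2^n$ spans the whole group but has $\abs{A-A}=O(n^2)$, so $\mu_G(A-A)$ is exponentially small and $\log(2/\mu_G(S))\approx n$, which swamps $Cd\log K+C\log q$. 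Your proposed fix of ``passing to the subspace generated by $A-A$'' therefore does not help, and the $\log_q$ versus $\log$ ``base-change'' discount in the $(A-A)$-codimension (a factor $1/\log q$) is far too small to cancel that term.

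The paper's proof closes this gap with a Freiman modelling step (\Cref{lemma:model}): after translating so $0\in A$, it Freiman $4$-isomorphically embeds $A$ into a small group $G'=\F_q^m$ with $\abs{G'}\leq qK^8\abs{A}$, where $\varphi(A)$ genuinely is dense, so $\log(2/\mu_{G'}(S))\leq C\log K + C\log q$; this is precisely where the $C\log q$ in the statement comes from (not from a change of logarithm base). It then applies \Cref{thm:PBRBohr} with $S=-\varphi(A)$, uses that $\varphi$ extends to a $2$-isomorphism on the difference set so $\abs{\varphi(A)-\varphi(A)}=\abs{A-A}$, and pulls the resulting subspace $V$ back to $H=\varphi^{-1}(V)\subset A-A$, which is again a subspace because $\varphi^{-1}$ is a $2$-isomorphism sending $0$ to $0$. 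To make your argument work you would need to insert this modelling step; as written, the density term is unbounded.
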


Thus the so-called Polynomial Bogolyubov--Ruzsa Conjecture holds for sets of bounded VC-dimension in the finite field setting, even in the strong sense of working with only the difference set $A-A$ instead of something like $2A-2A$. (See \cite{Sa:bogolyubov, Sa:struct} for the best known bounds towards the general conjecture, due to Sanders.) We note that this conjecture is in general stronger than the well-known Polynomial Freiman--Ruzsa Conjecture \cite{Gr:PFR}.

In the dense regime where $\abs{A} \geq \alpha \abs{G}$, we have the following version.

\begin{corollary}\label{cor:densePBR}
Let $A \subset \F_q^n$ be a set with $\vcd{A} \leq d$ and $\abs{A} \geq \alpha \abs{\F_q^n}$. Then $A-A$ contains a subspace of codimension at most $C d \log(1/\alpha)$.
\end{corollary}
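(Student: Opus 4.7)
The plan is to derive Corollary \ref{cor:densePBR} directly from Theorem \ref{thm:AR}, taking $\epsilon$ to be a small absolute constant. The intermediate Corollary \ref{cor:PBR} is not useful here: routing through it one would take $K \leq 1/\alpha$, but the $C\log q$ term in its codimension bound would then incur an unwanted $\log q$ loss that the statement of Corollary \ref{cor:densePBR} does not allow.

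First I would dispose of the easy case $\alpha > 1/2$. Here $|A| > |G|/2$, and for any $x \in G$ the sets $A$ and $A+x$ have total size exceeding $|G|$, so they must intersect, yielding $x \in A-A$. Hence $A-A = G$, and taking $H = G$ gives a subspace of codimension $0$, which trivially satisfies the conclusion.

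For the remaining case $\alpha \leq 1/2$, I would apply Theorem \ref{thm:AR} with $\epsilon = 1/2$. This produces a subspace $H \subset A - A$ of codimension at most $Cd \cdot 2^C \log(2/\alpha)$, together with a coset-union approximation, the latter being irrelevant to the present statement and simply discarded. Using $\log(2/\alpha) = \log 2 + \log(1/\alpha) \leq 2\log(1/\alpha)$ (valid since $\alpha \leq 1/2$), the codimension of $H$ is at most $C'd\log(1/\alpha)$ for a suitable absolute constant $C'$, which is the desired bound.

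There is no genuine obstacle at this stage; all of the real work is deferred to the proof of Theorem \ref{thm:AR}, and ultimately to the uniform continuity results Theorems \ref{thm:mainSum} and \ref{thm:mainAbelian} on which it rests. This deduction is itself a good advertisement for having the polynomial-in-$\epsilon^{-1}$ dependence in Theorem \ref{thm:AR}: once $\epsilon$ can be fixed to a constant at no cost, Bogolyubov-type statements in the dense regime drop out immediately.
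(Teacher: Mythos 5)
Your proof is correct, but it takes a slightly more roundabout route than the paper's. The paper deduces Corollary \ref{cor:densePBR} not from Theorem \ref{thm:AR} but from Theorem \ref{thm:PBRBohr}, which states directly that under the density and VC-dimension hypotheses $A-A$ contains a Bohr set of rank at most $Cd\log(2/\alpha)$ and radius at least $c/m$; combining with Lemma \ref{lemma:BohrStructure} (a rank-$m$ Bohr set in $\F_q^n$ contains a subspace of codimension at most $m$) immediately gives the subspace. Theorem \ref{thm:PBRBohr} is itself a two-line consequence of Theorem \ref{thm:mainAbelian}: apply it with $\epsilon = 1/2$ and note that $\mu_A \circ 1_A(t) \geq \mu_A \circ 1_A(0) - 1/2 = 1/2 > 0$ for every $t$ in the Bohr set, so every such $t$ lies in $A-A$. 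Your route goes through the full arithmetic regularity machinery of Theorem \ref{thm:AR} (hence through Theorem \ref{thm:ARBohr}, with its averaging over $T$, the set $A'$, and the symmetric-difference bookkeeping) when all that is actually needed is the bare almost-periodicity statement evaluated at the origin; the paper explicitly remarks, just before the proof of Theorem \ref{thm:PBRBohr}, that the regularity-lemma route ``almost'' works but the direct proof is ``much simpler'' and gives a slightly better radius. Both routes end up in the same place numerically, since $\epsilon$ is fixed to a constant either way, so this is a matter of economy rather than correctness. One point in your favour: you explicitly dispose of the $\alpha > 1/2$ case, where $\log(1/\alpha)$ can be arbitrarily close to $0$ while the theorems naturally give $\log(2/\alpha) \geq \log 2$; the paper glosses over this by saying the corollary ``follows immediately,'' so your treatment is the more scrupulous of the two. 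Your dismissal of Corollary \ref{cor:PBR} as an intermediary is also well reasoned --- the $C\log q$ term there really is an artefact of modelling and would spoil the clean bound.
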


For general sets $A$ of density at least $\alpha$, the best known results \cite{Sa:Greens, CrLaSi:2013} on subspaces in $A-A$ say roughly that $A-A$ contains a subspace of \emph{dimension} at least $\alpha n$, whereas in the bounded VC-dimension-case we instead get a strong upper bound on \emph{codimension}. The so-called niveau set example of Ruzsa \cite{Ru:1990}, extended to the finite field setting by Green \cite[Theorem 9.4]{Gr:FF} (see also \cite[Theorem 1.3]{Sa:Greens}), implies that one could not hope for more than codimension $c\sqrt{n}$ in general, even for $\alpha$ close to $1/2$.

%We remark that by a more intricate argument, one can obtain an analogous result for sets $A$ with small doubling.

\subsection*{Paper layout}
In Section \ref{section:mainproof} we prove our main general theorem, Theorem \ref{thm:mainSum}, followed in Section \ref{section:proofAbelian} by a review of Bohr sets and a proof of the abelian version, Theorem \ref{thm:mainAbelian}. In Section \ref{section:VCprops} we establish some basic properties of VC-dimension and analyse examples. Section \ref{section:AR} contains our applications: the proofs of the arithmetic regularity lemmas and Corollaries \ref{cor:PBR} and \ref{cor:densePBR}. In Section \ref{section:stability} we note some relationships between $k$-stability and VC-dimension. In Section \ref{section:aftermath} we show how one can obtain an improved dependence on the parameter $\epsilon$ in our results. Finally, we end with some remarks in Section \ref{section:conclusion}.

\subsection*{Some notation}
Throughout the paper, we employ the very convenient `constantly changing constant' device, meaning that, unless otherwise specified, the letters $c$ and $C$ denote positive absolute constants that can vary from occurrence to occurrence, and can be picked to make the statements true and the arguments work. In the statements of results, we always assume that the input-sets are non-empty. In the context of non-abelian groups, we write $X^{\otimes k}$ for the Cartesian product $X \times X \times \cdots \times X$, to distinguish it from the iterated product set $X^k$. Whenever we speak of the finite field $\F_q$, we allow $q$ to be an arbitrary prime power. Finally, for two sets $A$ and $B$, we denote their symmetric difference by $A \symmdiff B = (A \setminus B) \cup (B \setminus A)$.

\subsection*{Subspaces vs subgroups}
Some results in this paper are about vector spaces $\F_q^n$, where $\F_q$ is allowed to be an arbitrary finite field. If $q = p^m$ for a prime $p$, then $\F_q^n \cong \F_p^{mn}$ as groups, and if one is interested only in the group structure, for example in finding a bounded index subgroup, then the quantitative conclusions from some of our results are stronger if one applies them to the group $\F_p^{mn}$ rather than $\F_q^n$. If one however is interested in subspaces rather than subgroups, which for $\F_q^n$ can be different objects, then it is more natural to work directly with $\F_q^n$ instead.

\section{Proof of $L^\infty$-almost-periodicity for arbitrary groups}\label{section:mainproof}

Here we prove Theorem \ref{thm:mainSum}. Our argument largely follows the probabilistic method employed in \cite{CrSi:2010}, incorporating a Glivenko--Cantelli-type uniform law of large numbers valid in the bounded VC-dimension setup:

\begin{theorem}\label{thm:uniformEmp}
Let $\mathcal{X}$ be a set endowed with a probability measure $\mu$, and let $x_1, \ldots, x_n$ be independent $\mathcal{X}$-valued random variables distributed according to $\mu$. Let $\mathcal{A}$ be a countable class of measurable subsets of $\mathcal{X}$, and suppose $\vcd{\mathcal{A}} \leq d$, where $d \geq 1$. Then, provided $n \geq Cd/\epsilon^2$,
\[ \P\left( \sup_{A \in \mathcal{A}} \Abs{\E_{j\in [n]} 1_A(x_j) - \mu(A)} \leq \epsilon \right) \geq 0.99. \]
\end{theorem}

In other words, for collections of sets with bounded VC-dimension, the empirical mean is with high probability a uniformly good estimator for the measure. This type of result is well known in the theory of empirical processes, and this particular version can be read out of the paper \cite{Talagrand94} of Talagrand. Indeed, in \cite[Theorem 1.1]{Talagrand94}, take $\mathcal{F} = \{ 1_A : A \in \mathcal{A} \}$ and $M = \epsilon\sqrt{n}$, with $V = 4e$ as mentioned in the discussion after the theorem. Writing $n = Ld/\epsilon^2$ where $L \geq C$, the conclusion of the theorem implies that 
\[ \P\left( \sup_{A \in \mathcal{A}} \Abs{\E_{j\in [n]} 1_A(x_j) - \mu(A)} \geq \epsilon \right) \leq e^{-d(2L - \log(CL))}, \]
which for $L \geq C$ is at most $0.01$, giving the result. See also \cite[\S 13.3]{BoLuMa} for an alternative presentation. In either case, the argument makes use Haussler's universal entropy bound \cite{Haussler}. See Section \ref{section:conclusion} for a brief discussion of looser conditions than bounded VC-dimension under which the same conclusion holds.

Using this, we are ready to prove Theorem \ref{thm:mainSum}, which follows immediately from the following version with $k = 1$, by the triangle inequality and translation invariance. 

\begin{theorem}
Let $\epsilon \in (0,1]$ and $d \in \N$. Let $G$ be a group and let $A, B \subset G$ be finite subsets with $\vcd{A,B} \leq d$. If $\abs{S \cdot B} \leq K\abs{B}$ for some set $S \subset G$, then there is a set $T \subset S$ of size at least $0.99 K^{-C d/\epsilon^2} \abs{S}$ such that, for each $t \in T^{-1} T$,
\[ \norm{ \tau_t(\mu_B* 1_{A^{-1}}) - \mu_B* 1_{A^{-1}}}_\infty \leq \epsilon. \]
\end{theorem}
\begin{proof}
Let $n = Cd/\epsilon^2$ and let $\tup{s} \in B^{\otimes n}$ be sampled uniformly at random. We write
\[ \mu_\tup{s}* 1_{A^{-1}}(x) = \E_{j \in [n]} 1_A(x^{-1} s_j) = \E_{j \in [n]} 1_{(xA) \cap B}(s_j), \]
a random variable indexed by $x$, with expectation $\mu_B* 1_{A^{-1}}(x)$. By Theorem \ref{thm:uniformEmp}, with the class $\mathcal{A} = \{ (xA) \cap B : x \in B \cdot A^{-1} \}$, the probability measure $\mu_B(X) = \abs{B \cap X}/\abs{B}$ on $\mathcal{X} = G$, and the random variables $x_i = s_i$ we have that 
\[ \sup_{x \in B\cdot A^{-1}}\Abs{ \E_{j \in [n]} 1_{(xA) \cap B}(s_j) - \mu_B(xA) } \leq \tfrac{1}{2}\epsilon \]
holds with probability at least $0.99$. Since all terms are $0$ unless $x$ lies in $B \cdot A^{-1}$, we may extend the supremum to run over the whole of $G$. Noting that $\mu_B(xA) = \mu_B * 1_{A^{-1}}(x)$ we thus have that, with probability at least $0.99$,
\begin{equation}
\norm{ \mu_\tup{s}* 1_{A^{-1}} - \mu_B* 1_{A^{-1}} }_{\infty} \leq \tfrac{1}{2} \epsilon. \label{eqn:good}
\end{equation}

We may now follow the averaging argument of \cite{CrSi:2010} (see the proof of Theorem 3.1 there) to obtain the set $T$; we include the details (of a slight variant) for completeness. Call the tuples $\tup{s} \in G^{\otimes n}$ satisfying \eqref{eqn:good} \emph{good}, so that $\P_{\tup{s} \in B^{\otimes n}}(\text{$\tup{s}$ is good}) \geq 0.99$. Define
\[ T_\tup{s} = \{ t \in S : \text{ $t^{-1}\tup{s}$ is good} \} \subset S. \]
Then
\begin{align*}
\E_{\tup{s} \in (S \cdot B)^{\otimes n}} \abs{T_\tup{s}} &= \sum_{t \in S} \P_{\tup{s} \in (S \cdot B)^{\otimes n}}\left(\text{$t^{-1}\tup{s}$ is good}\right) \\
&\geq \frac{\abs{B}^n}{\abs{S\cdot B}^n} \P_{\tup{s} \in B^{\otimes n}}\left(\text{$\tup{s}$ is good}\right)\abs{S} \\
&\geq 0.99 K^{-n}\abs{S}.
\end{align*}
We now fix some $\tup{s}$ such that $T_\tup{s}$ has at least this size, and let this set be our $T$.

We next show that every $t \in T^{-1}T$ is an almost-period, completing the proof. Indeed, if $t = r^{-1}s$ with $r, s \in T$, then, since $r^{-1}\tup{s}$ is good,
\[ \norm{ \tau_r(\mu_{\tup{s}}* 1_{A^{-1}}) - \mu_B* 1_{A^{-1}} }_\infty \leq \tfrac{1}{2} \epsilon, \]
and similarly for $s$. Thus
\begin{align*}
\norm{ \tau_t(\mu_B* 1_{A^{-1}}) - \mu_B* 1_{A^{-1}} }_\infty &= \norm{ \tau_s \tau_{r^{-1}}(\mu_B* 1_{A^{-1}}) - \mu_B* 1_{A^{-1}} }_\infty \\
&\leq \norm{ \tau_s \tau_{r^{-1}}(\mu_B* 1_{A^{-1}}) - \tau_s(\mu_\tup{s}* 1_{A^{-1}}) }_\infty \\
&\qquad + \norm{ \tau_s (\mu_\tup{s}* 1_{A^{-1}}) - \mu_B* 1_{A^{-1}} }_\infty \\
&= \norm{ \mu_B* 1_{A^{-1}} - \tau_r(\mu_\tup{s}* 1_{A^{-1}}) }_\infty + \norm{ \tau_s (\mu_\tup{s}* 1_{A^{-1}}) - \mu_B* 1_{A^{-1}} }_\infty \\
&\leq \epsilon. \qedhere
\end{align*}
\end{proof}

\begin{remark}
We could also have used the theory surrounding randomly sampled $\epsilon$-approximations here, which involves sampling without replacement instead, as done in \cite{CrSi:2010}.
\end{remark}

As noted earlier, Theorem \ref{thm:mainSum} follows immediately by the triangle inequality.

\section{Bohr sets and the abelian case}\label{section:proofAbelian}
We now bootstrap Theorem \ref{thm:mainSum} to show Theorem \ref{thm:mainAbelian}, employing a simple Fourier-analytic argument. Let us first introduce some terminology and notation.

\begin{defn}
For an abelian group $G$, write $\Ghat = \{ \gamma : G \to S^1 : \text{ $\gamma$ a homomorphism} \}$ for the \emph{dual group} of $G$, consisting of homomorphisms from $G$ to the unit circle in $\C$, endowed with pointwise multiplication of functions as the group law. The elements of $\Ghat$ are called \emph{characters}. For a subset $\Gamma \subset \Ghat$ and a real $\rho \geq 0$, we define the \emph{Bohr set} on these data by
\[ \Bohr(\Gamma, \rho) = \{ x \in G : \abs{\gamma(x) - 1} \leq \rho \text{ for all $\gamma \in \Gamma$} \}, \]
and call $\abs{\Gamma}$ the \emph{rank} and $\rho$ the \emph{radius} of the Bohr set. For $B = \Bohr(\Gamma, \rho)$ and $\tau \geq 0$, we write $B_\tau = \Bohr(\Gamma, \tau\rho)$ for the Bohr set with its radius dilated by $\tau$.
\end{defn}

For the basics surrounding Bohr sets, the reader may consult \cite[Chapter 4]{TV}.\footnote{Note however that in \cite{TV}, the dual group $\Ghat$ is identified with $G$, these being isomorphic for finite abelian groups, and that Bohr sets are defined there in terms of $\arg(\gamma(x))$ being close to $0$ instead of $\gamma(x)$ being close to $1$.}  We note the following standard results, meant to illustrate that Bohr sets are large and structured.

\begin{lemma}
Let $G$ be a finite abelian group, and let $B \subset G$ be a Bohr set of rank $d$ and radius $\rho \leq 1$. Then $\abs{B} \geq \left(\tfrac{1}{2\pi}\rho \right)^d \abs{G}$.
\end{lemma}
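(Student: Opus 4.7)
The plan is a Dirichlet-style pigeonhole argument on the dual torus. Let $\Gamma = \{\gamma_1, \ldots, \gamma_d\}$ be the characters defining $B$, and introduce the ``argument map'' $\Phi : G \to (\R/\Z)^d$ sending $x$ to $(\theta_1(x), \ldots, \theta_d(x))$, where $\theta_i(x) \in (-1/2, 1/2]$ is the representative of $\arg \gamma_i(x)/(2\pi) \pmod 1$. The identity $|\gamma(y) - 1| = 2|\sin(\pi \theta_i(y))|$ converts the Bohr condition $|\gamma_i(y) - 1| \leq \rho$ into a smallness condition on each $\theta_i(y)$, which is more amenable to pigeonhole.

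First I would partition $(\R/\Z)^d$ into $N^d$ axis-aligned cubes of side $1/N$ for an integer $N$ to be chosen. By pigeonhole, some cube $Q$ satisfies $|\Phi^{-1}(Q)| \geq |G|/N^d$. Fix any $x_0 \in \Phi^{-1}(Q)$. Then for every $x \in \Phi^{-1}(Q)$ the difference $x - x_0$ satisfies $|\theta_i(x - x_0)| \leq 1/N$ for each $i$, so
\[ |\gamma_i(x - x_0) - 1| \;=\; 2|\sin(\pi \theta_i(x - x_0))| \;\leq\; 2\pi/N \]
by $|\sin t| \leq |t|$. Choosing $N$ to be the least integer with $2\pi/N \leq \rho$---namely $N = \lceil 2\pi/\rho \rceil$---ensures $\Phi^{-1}(Q) - x_0 \subset B$, whence $|B| \geq |\Phi^{-1}(Q)| \geq |G|/N^d$.

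The remaining task is to bound $N^d$ in terms of $\rho$ and $d$. For $\rho \leq 1$, the ceiling contributes only a bounded multiplicative factor, immediately giving $|B| \geq (c\rho)^d |G|$ for some absolute constant $c > 0$. To sharpen to the precise constant $2/\pi$ in the statement, I would replace the loose inequality $|\sin(\pi t)| \leq \pi |t|$ by the exact equivalence $|\gamma_i(y) - 1| \leq \rho \iff |\theta_i(y)| \leq \arcsin(\rho/2)/\pi$ and use $\arcsin(\rho/2) \geq \rho/2$ for $\rho \in [0,2]$, and possibly partition the torus into a non-uniform family of cells shaped to the Bohr inequality. The main obstacle in my view is the quantitative bookkeeping to get a constant this good; the underlying pigeonhole step is standard.
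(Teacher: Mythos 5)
Your pigeonhole argument is the standard route to a bound of the form $\abs{B}\geq (c\rho)^d\abs{G}$, and the steps as written are sound; note also that the paper gives no proof of this lemma, citing only \cite{TV}, so there is nothing to compare against directly. However, your worry about recovering the constant $\tfrac{2}{\pi}$ is well-founded and cannot be fixed by sharper bookkeeping, because the inequality as stated is actually false. Take $G=\Z/2\Z$, let $\Gamma$ consist of the nontrivial character $\gamma(x)=(-1)^x$ (so $d=1$), and take $\rho=1$: then $\abs{\gamma(1)-1}=2>1$, so $B=\Bohr(\Gamma,1)=\{0\}$ and $\abs{B}=1$, whereas $\bigl(\tfrac{2}{\pi}\rho\bigr)^d\abs{G}=\tfrac{4}{\pi}>1$. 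No choice of cells, uniform or not, will get around this.

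What your method does prove, cleanly, is $\abs{B}\geq\bigl(\tfrac{1}{\pi}\arcsin(\rho/2)\bigr)^d\abs{G}\geq\bigl(\tfrac{\rho}{2\pi}\bigr)^d\abs{G}$. To avoid the ceiling loss, replace the fixed partition by averaging: with $r=\tfrac{1}{\pi}\arcsin(\rho/2)$ (which is $\leq 1/6$ for $\rho\leq 1$), one has $\int_{(\R/\Z)^d}\bigl\lvert\Phi^{-1}\bigl(t+[-r/2,r/2]^d\bigr)\bigr\rvert\ud t=\abs{G}\,r^d$, so some translate meets $\Phi(G)$ with multiplicity at least $\abs{G}r^d$; fixing $x_0$ in its preimage and translating exactly as you do, every $x$ in the preimage satisfies $\abs{\theta_i(x-x_0)}\leq r$, hence $\abs{\gamma_i(x-x_0)-1}=2\abs{\sin(\pi\theta_i(x-x_0))}\leq 2\sin(\arcsin(\rho/2))=\rho$. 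Thus the exponent base should be on the order of $\tfrac{\rho}{2\pi}$, not $\tfrac{2\rho}{\pi}$; the constant in the paper's statement appears to be a slip (it plays no quantitative role elsewhere in the paper), and your instinct to flag the constant rather than claim it was correct.
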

This follows from \cite[Lemma 4.20]{TV}, with the rescaling in $\rho$ coming from the fact that we are measuring the distance from $\gamma(x)$ to $1$ instead of the distance from $\arg(\gamma(x))$ to $0$.

We include the proof of the following result as an illustrative example of Bohr sets being structured.
\begin{lemma}\label{lemma:BohrStructure}
Let $G$ be an abelian group, and let $B \subset G$ be a Bohr set of rank $d$. If $G$ has exponent $r$, then $B$ contains a subgroup of $G$ of index at most $r^d$. If $G = \F_q^n$ is a vector space over $\F_q$, then $B$ contains a subspace of codimension at most $d$.
\end{lemma}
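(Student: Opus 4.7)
The plan is to produce the desired subgroup directly as the common kernel of the characters defining $B$. Write $B = \Bohr(\Gamma,\rho)$ with $\Gamma = \{\gamma_1,\dots,\gamma_d\} \subset \Ghat$, and consider the homomorphism
\[ \phi : G \to (S^1)^d, \qquad \phi(x) = (\gamma_1(x),\dots,\gamma_d(x)). \]
Let $H = \ker\phi$. Any $x \in H$ trivially satisfies $\abs{\gamma_i(x)-1} = 0 \leq \rho$ for each $i$, so $H \subset B$; it then only remains to bound the index $[G:H]$, which by the first isomorphism theorem equals $\abs{\phi(G)}$.

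The key observation is that when $G$ has exponent $q$, every character $\gamma \in \Ghat$ takes values in the group $\mu_q$ of $q$-th roots of unity, since $\gamma(x)^q = \gamma(qx) = \gamma(0) = 1$ (writing $G$ multiplicatively as in the lemma, $x^q = 1$ gives the same conclusion). Consequently $\phi(G) \subset \mu_q^d$, a group of order $q^d$, and $[G:H] \leq q^d$ as desired.

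For the finite field statement, I would specialise the above: if $G = \F_q^n$ with $q = p^k$, then as an additive group $G$ has exponent $p$, and every character factors as $x \mapsto e^{2\pi i \ell(x)/p}$ for some $\F_p$-linear functional $\ell : G \to \F_p$. Thus each $\ker\gamma_i$ is an $\F_p$-hyperplane in $G$, and $H$ is their intersection, an $\F_p$-subspace of codimension at most $d$. (When $q$ is prime this is the same as an $\F_q$-subspace of codimension at most $d$, matching the statement.)

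There is no serious obstacle here --- the argument is purely a character-theoretic packaging of the definition of a Bohr set --- the only subtlety worth flagging is the distinction between $\F_p$- and $\F_q$-subspaces when $q$ is a non-prime prime power, which is resolved by noting that characters are inherently $\F_p$-valued after taking logarithms.
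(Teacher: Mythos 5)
Your proof is essentially the same as the paper's: both take $H = \ker\phi$ where $\phi(x) = (\gamma(x))_{\gamma\in\Gamma}$, observe that exponent $q$ forces each $\gamma$ to land in the $q$-th roots of unity so that $\phi(G) \leq U_q^d$, and deduce $[G:H]\leq q^d$ via the first isomorphism theorem. The paper's proof is a terse version of exactly this.

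One small point on your discussion of the finite-field case with $q = p^k$, $k>1$: you correctly identify that each $\ker\gamma_i$ is an $\F_p$-hyperplane, so $H$ is an $\F_p$-subspace of $\F_p$-codimension $\leq d$ — but your closing remark that the $\F_p$-vs-$\F_q$ distinction ``is resolved by noting that characters are inherently $\F_p$-valued'' merely restates the difficulty; an $\F_p$-subspace of $\F_q^n$ need not be an $\F_q$-subspace at all. The missing observation is to write each $\F_p$-linear form as $\ell_i = \mathrm{Tr}_{\F_q/\F_p}\circ L_i$ for some $\F_q$-linear $L_i : \F_q^n \to \F_q$ (non-degeneracy of the trace form), and to take $W = \bigcap_i \ker L_i \subset H$, an $\F_q$-subspace of $\F_q$-codimension $\leq d$. (The paper, which just proves the exponent statement and states the field case as an ``in particular,'' is itself silent on this point; it is immediate only for $q$ prime, where every subgroup of $\F_q^n$ is a subspace.)
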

Recall the distinction between subgroups and subspaces made at the end of Section \ref{section:intro}.

\begin{proof}
For the first claim, note that for every $\gamma \in \Ghat$ we have $\gamma(x)^r = \gamma(rx) = \gamma(0) = 1$, whence every $\gamma$ takes values in the $r$-th roots of unity $U_r \leq \C^\times$. The map $\varphi : G \to U_r^\Gamma$ defined by $x \mapsto (\gamma(x))_{\gamma \in \Gamma}$ is a homomorphism whose kernel is $\ker(\varphi) = \Bohr(\Gamma, 0) \subset B$, and $G/\ker(\varphi) \cong \ima(\varphi) \leq U_r^\Gamma$.

For the second claim, recall that each character $\gamma$ on $\F_q^n$ has the form $\gamma(x_1,\ldots,x_n) = e^{2\pi i \text{Tr}(\xi_1 x_1 + \cdots + \xi_n x_n)/p}$ for some $\xi_j \in \F_q$, where $p$ is the characteristic of $\F_q$. The kernel $\ker(\gamma)$ of such a character thus contains a subspace of codimension at most $1$, given by the condition $\xi_1 x_1 + \cdots + \xi_n x_n = 0$. Since $B$ contains $\bigcap_{\gamma \in \Gamma} \ker(\gamma)$, it contains a subspace of codimension at most $d$.
\end{proof}

For groups without a nice subgroup structure, for example the groups $\Z/N\Z$ of prime order, Bohr sets still contain very naturally structured sets, such as long arithmetic progressions and dense generalised arithmetic progressions (see Definition \ref{defn:GAP}). The general case is a mix of these two: Bohr sets contain large so-called \emph{coset progressions}. We will not elaborate on this here; see \cite[Chapter 4]{TV} for further details. We do, however, require one further technical aspect of Bohr sets.

\begin{defn}[Regular Bohr sets]
We say that a Bohr set $B$ of rank $d$ is \emph{regular} if 
\[ 1 - 12 d \abs{\tau} \leq \frac{\abs{B_{1+\tau}}}{\abs{B}} \leq 1 + 12 d \abs{\tau} \]
whenever $\abs{\tau} \leq 1/12d$.
\end{defn}

We shall apply this through the following direct consequence:

\begin{lemma}\label{lemma:regConv}
If $B$ is a regular Bohr set of rank $d$ and $t \in B_\delta$ with $\delta \leq \epsilon/24d$, then
\[ \sum_{x \in G} \abs{\mu_B(x + t) - \mu_B(x)} \leq \epsilon. \]
\end{lemma}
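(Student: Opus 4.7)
The plan is to rewrite the left-hand side as a (normalised) symmetric-difference size, and then control the blow-up $\abs{B_{1+\delta}} - \abs{B}$ directly via the regularity hypothesis. Concretely, since $1_B(x+t) = 1_{B-t}(x)$, I would begin with the identity
\[ \sum_{x \in G} \abs{\mu_B(x+t) - \mu_B(x)} = \frac{1}{\abs{B}} \sum_{x \in G} \abs{1_B(x+t) - 1_B(x)} = \frac{\abs{B \symmdiff (B-t)}}{\abs{B}}, \]
so the task reduces to showing $\abs{B \symmdiff (B-t)} \leq \epsilon \abs{B}$.

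The key containment is $B \cup (B - t) \subset B_{1+\delta}$. This follows from the basic sub-additivity $\abs{\gamma(x+y) - 1} \leq \abs{\gamma(x) - 1} + \abs{\gamma(y) - 1}$ (a one-line consequence of $\gamma(x+y) - 1 = \gamma(x)(\gamma(y) - 1) + (\gamma(x) - 1)$ together with $\abs{\gamma(x)} = 1$): applied with $x \in B$ and $\pm t \in B_\delta$, using that Bohr sets are symmetric, it yields $B \pm t \subset B_{1+\delta}$, and hence the claim.

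Since $\abs{B-t} = \abs{B}$, inclusion-exclusion gives
\[ \abs{B \symmdiff (B-t)} = 2\bigl(\abs{B \cup (B-t)} - \abs{B}\bigr) \leq 2\bigl(\abs{B_{1+\delta}} - \abs{B}\bigr). \]
Now I would invoke the regularity of $B$ with $\tau = \delta$; this is legal since $\delta \leq \epsilon / 24 d \leq 1 / 12 d$. The regularity estimate gives $\abs{B_{1+\delta}} - \abs{B} \leq 12 d \delta \abs{B}$, and combining the two displays yields $\abs{B \symmdiff (B-t)} \leq 24 d \delta \abs{B} \leq \epsilon \abs{B}$, as required.

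There is no real obstacle here: the lemma is essentially a direct repackaging of the definition of regularity, included for later convenience. The only point that needs any care is the containment step, where one has to remember to use both the symmetry of $B_\delta$ and the triangle-type inequality for the `Bohr norm' $x \mapsto \abs{\gamma(x) - 1}$.
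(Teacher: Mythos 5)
Your proof is correct and is exactly the ``direct consequence'' argument the paper has in mind (the paper states this lemma without proof, as a standard consequence of the definition of regularity, referring to \cite{TV} for background). The reduction to $\abs{B \symmdiff (B-t)}/\abs{B}$, the containment $B \cup (B-t) \subset B_{1+\delta}$ via the triangle inequality for $x \mapsto \abs{\gamma(x)-1}$ together with symmetry of $B_\delta$, and the invocation of regularity with $\tau = \delta$ are all sound. The only point worth flagging: the step $\delta \leq \epsilon/24d \leq 1/12d$ implicitly uses $\epsilon \leq 2$; this is harmless since the conclusion is trivial when $\epsilon > 2$ (the left-hand side is at most $2$ unconditionally), but it would be cleaner to say so.
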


Not all Bohr sets are regular in this way, but regular dilates are plentiful:
\begin{lemma}\label{lemma:bohrReg}
If $B$ is a Bohr set, then there is a $\tau \in [\frac{1}{2}, 1]$ for which $B_\tau$ is regular.
\end{lemma}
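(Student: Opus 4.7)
The proof is a standard pigeonhole/covering argument on the monotone function $\tau \mapsto \log|B_\tau|$, exploiting the fact that this function cannot grow too rapidly on $[1/2,1]$.

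First, I would invoke the standard Bohr-set volume estimate: by a covering argument (cover $B_1$ by translates of $B_{1/4}$ using a maximal packing of $B_{1/8}$-translates, say), one has $|B_1| \leq C_0^d\,|B_{1/2}|$ for some absolute constant $C_0$. Setting $f(\tau) = \log|B_\tau|$, this gives a non-decreasing function on $[1/2,1]$ whose total variation is at most $d\log C_0$. More generally, $f$ extends monotonically to a slightly larger interval, say $[1/2 - 1/(24d),\, 1 + 1/(24d)]$, with total variation still $O(d)$.

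Second, I would argue by contradiction: suppose no $\tau \in [1/2,1]$ is regular. For each such $\tau$ there is an $\eta = \eta(\tau)$ with $|\eta| \leq 1/(12d)$ for which $|B_{\tau(1+\eta)}|/|B_\tau|$ falls outside $[1-12d|\eta|,\, 1+12d|\eta|]$. Since $f$ is monotone, failure on the ``upper'' side means $f(\tau(1+|\eta|)) - f(\tau) \geq \log(1+12d|\eta|) \geq 6d|\eta|$, and failure on the ``lower'' side means $f(\tau) - f(\tau(1-|\eta|)) \geq 6d|\eta|$ similarly. By pigeonhole, one type of failure holds for $\tau$ in a subset $E\subset[1/2,1]$ of measure $\geq 1/4$; assume WLOG it is the upper one, so we have a family of intervals $I_\tau = [\tau,\tau(1+|\eta(\tau)|)]$ of length $\tau|\eta(\tau)|$ covering $E$, each contained in $[1/2,\,1+1/(24d)]$, and with $f(I_\tau^+) - f(I_\tau^-) \geq 6d\cdot(\text{length }I_\tau)/\tau \geq 6d\cdot(\text{length }I_\tau)$.

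Third, I would apply a one-dimensional Vitali-type selection (the greedy ``pick a shortest/largest, discard overlaps'' argument): extract a pairwise-disjoint subfamily $\{I_{\tau_1},\dots,I_{\tau_N}\}$ whose union has measure at least a fixed fraction (say $1/3$) of $|E|$, so at least $1/12$. Since the $I_{\tau_i}$ are disjoint and $f$ is monotone,
\[ \sum_{i=1}^N \bigl(f(\tau_i(1+|\eta(\tau_i)|)) - f(\tau_i)\bigr) \;\leq\; f\bigl(1+\tfrac{1}{24d}\bigr) - f\bigl(\tfrac12\bigr) \;\leq\; d\log C_0 + O(1), \]
while the left side is at least $6d \sum_i (\text{length }I_{\tau_i}) \geq 6d\cdot\tfrac{1}{12} = d/2$. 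Choosing the constant $12$ in the definition of regularity large enough relative to $\log C_0$ (or equivalently sharpening the volume constant), these bounds contradict each other, producing the desired $\tau$.

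The main obstacle is really just bookkeeping of constants: one must verify that the constant $12$ appearing in the definition of regularity is large enough that $6d \cdot (\text{Vitali fraction}) > d \log C_0 + O(1)$, where $C_0$ is the constant from the Bohr-set covering estimate. If this is tight, one can either absorb slack by working on a slightly larger interval than $[1/2,1]$, by choosing the Vitali fraction more carefully, or by iterating/averaging. Handling the two sides of regularity (upper and lower) symmetrically via the initial pigeonhole step is essentially free.
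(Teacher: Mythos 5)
The paper states this lemma without proof, pointing to \cite[Chapter 4]{TV} for details; what you have reconstructed is indeed the standard Bourgain--Tao--Vu argument (pigeonhole on the monotone function $\tau \mapsto \log\abs{B_\tau}$, Vitali covering to extract disjoint increments, bounded total variation from the volume estimate). So the approach is correct and is the one the reference uses.

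However, the specific constant accounting you have written down does not close, and I want to flag the two places where you give away too much. First, the volume bound: from $\abs{B_\rho} \geq (2\rho/\pi)^d \abs{G}$ one gets $\abs{B_1}/\abs{B_{1/2}} \leq \pi^d$, so $\log C_0 \geq \log\pi > 1$, while your final lower bound on the accumulated increment is only $6d \cdot \tfrac{1}{12} = d/2$. That is a factor of roughly $2.3$ short. Second, the two lossy steps that created this shortfall: (a) you pigeonhole into ``upper'' vs.\ ``lower'' failures and keep only a quarter of $[1/2,1]$, but this is unnecessary --- a lower failure at $\tau$ also produces an interval $[\tau(1-\abs{\eta}),\tau]$ on which $f$ increases by at least $-\log(1-12d\abs{\eta}) > 12d\abs{\eta}$ (note: $12$, not $6$, since $-\log(1-x) > x$, whereas $\log(1+x)\geq x/2$ only gives $6$ in the upper case), so one should treat all bad $\tau$ together, giving a family of intervals whose union contains the whole bad set; (b) the one-dimensional covering fraction can be improved from $1/3$ if one is more careful (e.g.\ a greedy left-to-right selection, or the $1$D Besicovitch bound). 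Combining (a) and (b), and working over the slightly enlarged interval $[\tfrac12 - \tfrac{1}{12}, 1 + \tfrac{1}{12}]$ as you suggest, the accumulated increment exceeds $\log\bigl(\abs{B_{13/12}}/\abs{B_{5/12}}\bigr) \leq d\log(6\pi/5)$ and the contradiction goes through. So the gap is real but it is exactly the bookkeeping you already anticipated; I would not call it a missing idea, but you should redo the arithmetic because the version as written does not yield a contradiction.
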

One can prove this in the same way as \cite[Lemma 4.25]{TV}, just taking into account the aforementioned difference in conventions for defining Bohr sets.

For our proof of Theorem \ref{thm:mainAbelian}, we shall use the Fourier transform:
\begin{defn}
Let $G$ be a finite abelian group. We define the Fourier transform of a function $f : G \to \C$ to be the function $\fhat : \Ghat \to \C$ given by
\[ \fhat(\gamma) = \sum_{x \in G} f(x)\overline{\gamma(x)}. \]
\end{defn}

With this normalisation, the Fourier inversion formula, convolution identity and Parseval's identity take the form
\[ f = \E_{\gamma} \fhat(\gamma) \gamma, \qquad \widehat{f*g} = \fhat \cdot \widehat{g}, \qquad \E_{\gamma} \abs{\fhat(\gamma)}^2 = \norm{f}_{2}^2, \]
where $\E_{\gamma} = \tfrac{1}{\abs{\Ghat}} \sum_{\gamma \in \Ghat}$ and $\norm{f}_2^2 = \sum_{x \in G} \abs{f(x)}^2$.

We now perform the bootstrapping mentioned at the start of this section. The idea is to convolve $\mu_B* 1_{-A}$ with an iterated convolution of the set of almost-periods from Theorem \ref{thm:mainSum}, using Chang's lemma to control the dimension. Similar arguments are used in \cite{Sa:bogolyubov, CrLaSi:2013, ScSi:2016}; see for example the proof of \cite[Theorem 5.4]{ScSi:2016}.

\begin{proof}[Proof of Theorem \ref{thm:mainAbelian}]
The result is trivial if $\eta > 2\epsilon^{-1}$, with $T = G$, since $\norm{{\mu_B* 1_{-A}}}_\infty \leq 1/\eta$, so we assume $\eta \leq 2\epsilon^{-1}$. We apply Theorem \ref{thm:mainSum} with parameters $\epsilon/3$ and $k = \ceiling{ C\log(2/\epsilon\eta^{1/2}) }$, giving us a set $X \subset S$ of size at least $0.99 K^{-C dk^2/\epsilon^2}\abs{S}$ such that, for each $x \in G$ and $t \in kX-kX$,
\begin{equation}
\abs{ \mu_B* 1_{-A}(x+t) - \mu_B* 1_{-A}(x) } \leq \tfrac{1}{3}\epsilon.\label{eqn:pre}
\end{equation}
Let $\mu = \mu_X^{(k)} * \mu_{-X}^{(k)}$, where $\mu_R^{(k)} = \mu_R*\mu_R *\cdots *\mu_R$ denotes the $k$-fold convolution of $\mu_R$ with itself, so that $\mu$ is supported on $kX-kX$, is symmetric and satisfies $\sum_{t \in G} \mu(t) = 1$. 
%\[ \mu_B*1_{-A}*\mu(x) = \E_{t_1, t_2, \ldots, t_{2k} \in X}\, \mu_B*1_{-A}(x + t_1 + \cdots + t_k - t_{k+1} - \cdots - t_{2k}). \]
The triangle inequality coupled with \eqref{eqn:pre} then gives, for any $x \in G$,
\[ \abs{ \mu_B* 1_{-A}* \mu(x) - \mu_B* 1_{-A}(x) } = \abs{ \sum_{t \in kX-kX} \mu(t)( \mu_B*1_{-A}(x+t) - \mu_B*1_{-A}(x) ) } \leq \tfrac{1}{3}\epsilon. \]
Thus, for any $x,t \in G$ we have by a couple more applications of the triangle inequality that
\begin{align}
\abs{ \mu_B* 1_{-A}(x+t) - \mu_B* 1_{-A}(x) } &\leq \abs{ \mu_B* 1_{-A}(x+t) - \mu_B* 1_{-A} * \mu(x+t) } \nonumber \\
&\qquad + \abs{ \mu_B* 1_{-A}*\mu(x+t) - \mu_B* 1_{-A} * \mu(x) } \nonumber \\
&\qquad\qquad + \abs{ \mu_B* 1_{-A} * \mu(x) - \mu_B* 1_{-A}(x) } \nonumber \\
&\leq \tfrac{2}{3}\epsilon + \abs{ \mu_B* 1_{-A} * \mu(x+t) - \mu_B* 1_{-A}* \mu(x) }.\label{eqn:fourierTriangle}
\end{align}
Our task thus reduces to finding a set $T$ for which this last term is small for all $t \in T$.

Let us write $\Gamma = \{ \gamma \in \Ghat : \abs{\widehat{\mu_X}(\gamma)} \geq \tfrac{1}{2} \}$ for the large spectrum of $\mu_X$. Then for any $t \in \Bohr\left(\Gamma, \tfrac{1}{3}\epsilon \eta^{1/2}\right)$ and any $x \in G$ we have
\begin{align}
\abs{ \mu_B* 1_{-A}*\mu(x+t) - \mu_B* 1_{-A}*\mu(x) } &\leq \E_{\gamma \in \Ghat} \abs{\widehat{\mu_B}(\gamma)} \abs{\widehat{1_A}(\gamma)} \abs{\widehat{\mu_X}(\gamma)}^{2k} \abs{\gamma(t)-1},\label{eqn:Fbound}
\end{align}
by the Fourier inversion formula, the convolution identity and the triangle inequality. Splitting this average up according to whether $\gamma \in \Gamma$ or not, bounding 
\[ \abs{\widehat{\mu_X}(\gamma)} \leq \norm{\mu_X}_1 = 1 \text{ and } \abs{\gamma(t)-1} \leq \tfrac{1}{3}\epsilon \eta^{1/2}\quad \text{for $\gamma \in \Gamma$,} \]
and 
\[ \abs{\widehat{\mu_X}(\gamma)} \leq 1/2 \text{ and } \abs{\gamma(t)-1} \leq 2 \quad\text{for $\gamma \notin \Gamma$,} \]
we see that \eqref{eqn:Fbound} is at most
\begin{align*}
\tfrac{1}{3} \epsilon \eta^{1/2} \E_{\gamma \in \Ghat} \abs{\widehat{\mu_B}(\gamma)} \abs{\widehat{1_A}(\gamma)}1_\Gamma(\gamma) + 2^{-2k+1} \E_{\gamma \in \Ghat} \abs{\widehat{\mu_B}(\gamma)} \abs{\widehat{1_A}(\gamma)}1_{\Ghat\setminus \Gamma}(\gamma). 
\end{align*}
Our choice of $k$ ensures that $2^{-2k+1} \leq \tfrac{1}{3} \epsilon \eta^{1/2}$, and so \eqref{eqn:Fbound} is at most
\begin{align*}
\tfrac{1}{3} \epsilon \eta^{1/2} \E_{\gamma \in \Ghat} \abs{\widehat{\mu_B}(\gamma)} \abs{\widehat{1_A}(\gamma)} \leq \tfrac{1}{3} \epsilon \eta^{1/2} \norm{\mu_B}_2 \norm{1_A}_2 = \tfrac{1}{3}\epsilon,
\end{align*}
where the inequality uses the Cauchy-Schwarz inequality and Parseval's identity. Combining this final bound with \eqref{eqn:fourierTriangle}, which was valid for any $t$, we are almost done: it only remains to bound the rank of the Bohr set. By Chang's lemma \cite[Lemma 4.36]{TV}, the spectrum $\Gamma$ (which in the notation of \cite{TV} is $\text{Spec}_{1/2}(X)$; beware also that \cite{TV} uses normalised Fourier transforms) is contained in the $\{0,\pm 1\}$-span of a set of $m \leq C\log(2\abs{G}/\abs{X})$ characters $\Lambda$. We thus take the Bohr set of the conclusion to be $T = \Bohr\left(\Lambda, \tfrac{c}{m}\epsilon \eta^{1/2}\right) \subset \Bohr\left(\Gamma, \tfrac{1}{3}\epsilon \eta^{1/2}\right)$, the constant $c \in [1/6,1/3]$ being picked for regularity using Lemma \ref{lemma:bohrReg}.
\end{proof}

\section{Examples and basic properties of group VC-dimension}\label{section:VCprops}
In this section we note some basic properties of our notion of group-theoretic VC-dimension, and give some examples of types of set with small VC-dimension.

\subsection*{Basic properties of group VC-dimension}

Our notion of $\vcd{A,B}$ is of course not monotone with respect to $A$, but it is with respect to the ground set $B$:

\begin{proposition}[Monotonicity]\label{prop:monotonicity}
Let $A, B$ be subsets of a group $G$.
\begin{enumerate}
\item If $B \subset B'$, then $\vcd{A,B} \leq \vcd{A, B'}$.\label{item:mono}
\item Writing $d = \vcd{\{ (xA) \cap B : x \in G \}}$, we have $d - 1 \leq \vcd{A,B} \leq d$.\label{item:unrestricted}
\item $\vcd{A,G} - 1 \leq \vcd{A} \leq \vcd{A,G}$.
\item If $\vcd{A} \leq d$, then $\vcd{A,B} \leq d+1$ for every $B$.\label{item:vccontrol}
\end{enumerate}
\end{proposition}
\begin{proof}
The first item is immediate, as if $X \subset B$ is shattered by $\{ (xA) \cap B : x \in B\cdot A^{-1} \}$ then it is also shattered by $\{ (xA) \cap B' : x \in B' \cdot A^{-1} \}$. 
The second item follows from the fact that the only potential difference between the corresponding set systems is the empty set. The upper bound in the third item follows from \textit{(\ref{item:mono})}. For the lower bound, suppose $\{ xA : x \in G \}$ shatters a set $X$. Then $X \subset tA$ for some $t \in G$. The family $\{ (xA) \cap A : x \in G \}$ then shatters $t^{-1}X \subset A$, and so the bound follows from the lower bound in \textit{(\ref{item:unrestricted})}. For \textit{(\ref{item:vccontrol})}, $\vcd{A,B} \leq \vcd{A,G} \leq \vcd{A}+1$.
\end{proof}

As one would expect, VC-dimension is translation-invariant:

\begin{lemma}[$\vcdsym$ under translation]
Let $A, B \subset G$. Then $\vcd{At, Bt} = \vcd{A,B} = \vcd{tA, tB}$ for every $t \in G$. In particular, $\vcd{At} = \vcd{A} = \vcd{tA}$.
\end{lemma}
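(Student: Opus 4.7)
The plan is to show that in each case, the family of sets defining the translated VC-dimension is a bijective image of the family defining $\vcd(A,B)$, under a translation map on $G$. Since a bijection $\varphi$ of $G$ sends a shattered set $X$ for $\mathcal{F}$ to a shattered set $\varphi(X)$ for $\{\varphi(F) : F \in \mathcal{F}\}$ and vice versa, VC-dimension is invariant under such bijections, and the equalities will follow.

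For the right-translation identity $\vcd(At, Bt) = \vcd(A,B)$, I would first observe that $At \cdot (Bt)^{-1} = At \cdot t^{-1} B^{-1} = A\cdot B^{-1}$, so the indexing set is unchanged. Then for each $x \in A\cdot B^{-1}$, the set $x(Bt) = (xB)t$, and hence
\[ At \cap x(Bt) = (A \cap xB)t. \]
Thus the family $\{At \cap (xBt) : x \in At\cdot (Bt)^{-1}\}$ is precisely the image of $\{A \cap (xB) : x \in A\cdot B^{-1}\}$ under right-multiplication by $t$, which is a bijection of $G$. So the two families have the same VC-dimension.

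For the left-translation identity $\vcd(tA, tB) = \vcd(A,B)$, care is needed because of non-commutativity. Here $tA \cdot (tB)^{-1} = tA\cdot B^{-1} t^{-1} = t(A\cdot B^{-1})t^{-1}$, so each index may be written as $x = tyt^{-1}$ with $y \in A\cdot B^{-1}$. One then computes $x(tB) = tyt^{-1} \cdot tB = t(yB)$, whence
\[ tA \cap x(tB) = t(A \cap yB). \]
So the family $\{tA \cap x(tB) : x \in tA\cdot (tB)^{-1}\}$ is the image of $\{A \cap (yB) : y \in A\cdot B^{-1}\}$ under left-multiplication by $t$, again a bijection of $G$, giving equality of VC-dimensions. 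Finally, the ``in particular'' statement is just the $B = A$ case of what has already been shown.

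There is no real obstacle here: the only point requiring any care is tracking the conjugation $tAB^{-1}t^{-1}$ in the left-translation case to recognise that the resulting family is genuinely a left-translate of the original.
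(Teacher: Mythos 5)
Your proof is correct and follows essentially the same route as the paper: compute the translated index set ($A\cdot B^{-1}$ stays fixed under right-translation, becomes $t(A\cdot B^{-1})t^{-1}$ under left-translation), and observe that the resulting set system is the right- or left-translate by $t$ of the original, so shattered sets correspond under the bijection $x \mapsto xt$ or $x \mapsto tx$. The paper's proof is a terser version of exactly this argument, including the same substitution $x = tyt^{-1}$ in the left-translation case.
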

\begin{proof}
We have
\[ \vcd{At, Bt} = \vcd{\{ xAt \cap Bt : x \in B\cdot A^{-1} \}}, \]
and it is apparent that the family $\{ xA \cap B : x \in B\cdot A^{-1} \}$ shatters $X$ iff the above family shatters $Xt$. For left-translation we are similarly done, as
\[ \vcd{tA, tB} = \vcd{\{ tyA \cap tB : y \in B \cdot A^{-1} \}}. \qedhere \]
\end{proof}

In fact, a more general property is true: $\vcdsym$ is invariant under Freiman isomorphism:

\begin{defn}\label{defn:FreimanIso}
Let $A, B$ be subsets of a group $G$, and $C, D$ subsets of a group $H$. A pair of maps $\varphi_A : A \to C$, $\varphi_B : B \to D$ is called a \emph{(Freiman) $2$-isomorphism} if each map is a bijection and it holds for all $a_1,a_2 \in A$ and $b_1,b_2 \in B$ that
\[ a_1 b_1^{-1} = a_2 b_2^{-1} \text{ iff } \varphi_A(a_1)\varphi_B(b_1)^{-1} = \varphi_A(a_2) \varphi_B(b_2)^{-1}. \]
The pairs $(A,B)$ and $(C,D)$ are said to be \emph{(Freiman) $2$-isomorphic} if such a pair of maps exists. If $\varphi_A = \varphi_B = \varphi$, so in particular $A=B$ and $C=D$, then we say that $\varphi$ is a \emph{(Freiman) $2$-isomorphism}, and that $A$ and $C$ are \emph{(Freiman) $2$-isomorphic}.
\end{defn}

Note in particular that a pair of maps as above induces a well-defined bijection $\varphi : B\cdot A^{-1} \to D\cdot C^{-1}$, given by $\varphi(ba^{-1}) = \varphi_B(b) \varphi_A(a)^{-1}$.

Some typical examples of Freiman isomorphisms are translations, dilations in certain contexts, embeddings of subsets of infinite groups into finite ones --- for example embedding a finite subset of $\Z$ into $\Z/N\Z$. A particularly useful application concerns embedding a subset $A$ of an abelian group satisfying $\abs{A+A} \leq K\abs{A}$ into a finite abelian group $G$ where $\abs{A} \geq \abs{G}/C(K)$, that is where the isomorphic copy is dense --- this is known in the literature as \emph{modelling}. We include such a lemma in Section \ref{section:AR}, and otherwise refer the reader to \cite[Chapter 5.3]{TV} for more information on Freiman isomorphisms.

\begin{lemma}[$\vcdsym$ under Freiman isomorphism]\label{lemma:Freiman}
Let $A, B$ be subsets of a group $G$, and $C, D$ subsets of a group $H$. If $(A,B)$ and $(C,D)$ are $2$-isomorphic, then $\vcd{A,B} = \vcd{C,D}$. In particular, if $A$ and $C$ are $2$-isomorphic, then $\vcd{A} = \vcd{C}$.
\end{lemma}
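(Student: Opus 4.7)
The plan is to show that the two set systems $\mathcal{F}_1 = \{ A \cap (xB) : x \in A\cdot B^{-1} \}$ on ground set $A$ and $\mathcal{F}_2 = \{ C \cap (yD) : y \in C\cdot D^{-1} \}$ on ground set $C$ are isomorphic as incidence structures, via the bijection $\varphi_A : A \to C$ on the ground set together with the induced bijection $\varphi : A\cdot B^{-1} \to C\cdot D^{-1}$ on the index set (noted after Definition \ref{defn:FreimanIso}). Equality of VC-dimensions is immediate from such an isomorphism, so this will give $\vcd(A,B) = \vcd(C,D)$, and the ``in particular'' statement then follows by specialising to $\varphi_A = \varphi_B = \varphi$.

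The crux will be the incidence-preservation claim: for every $a \in A$ and $x \in A\cdot B^{-1}$,
\[ a \in xB \iff \varphi_A(a) \in \varphi(x) D. \]
I would prove this by rewriting $a \in xB$ as ``there exists $b' \in B$ with $ab'^{-1} = x$'', fixing any representation $x = a_0 b_0^{-1}$ with $a_0 \in A$, $b_0 \in B$, and applying the biconditional in Definition \ref{defn:FreimanIso} to the pairs $(a, b')$ and $(a_0, b_0)$. That converts $ab'^{-1} = a_0 b_0^{-1}$ into $\varphi_A(a)\varphi_B(b')^{-1} = \varphi(x)$, which by bijectivity of $\varphi_B : B \to D$ says exactly $\varphi_A(a) \in \varphi(x) D$. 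Well-definedness of $\varphi$ (so that the right-hand side does not depend on which representation of $x$ we choose) is precisely the content of the ``iff'' in the $2$-isomorphism definition, so this causes no trouble.

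With the incidence equivalence in hand, transferring shattering is routine bookkeeping: given $X \subset A$ shattered by $\mathcal{F}_1$ and any $Z \subset \varphi_A(X)$, pull $Z$ back via $\varphi_A^{-1}$ to $Y \subset X$, use shattering of $X$ to find $x \in A\cdot B^{-1}$ with $Y = X \cap xB$, and check via the incidence claim that $Z = \varphi_A(X) \cap \varphi(x) D$. A symmetric argument using the inverse pair $(\varphi_A^{-1}, \varphi_B^{-1})$ --- which is visibly a $2$-isomorphism from $(C,D)$ to $(A,B)$ --- handles the reverse direction. The only step requiring genuine thought is the incidence equivalence itself; the restriction $x \in A\cdot B^{-1}$ in the definition of $\vcd(A,B)$ is exactly what makes $\varphi$ applicable, and the non-abelian setting requires no extra care beyond watching the order of $a$ and $b^{-1}$.
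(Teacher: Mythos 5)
Your proposal is correct and follows essentially the same route as the paper: both arguments hinge on the incidence-preservation fact that $a \in xB$ iff $\varphi_A(a) \in \varphi(x)D$, established directly from the biconditional in the definition of $2$-isomorphism and the induced bijection on $A\cdot B^{-1}$. The paper phrases this as an equality of set families $\{C\cap yD : y\in C\cdot D^{-1}\} = \{\varphi_A(A\cap xB): x\in A\cdot B^{-1}\}$ rather than as an isomorphism of incidence structures, but this is a cosmetic difference only.
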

\begin{proof}
Take a pair of maps $\varphi_A, \varphi_B$ giving a $2$-isomorphism. We claim that 
\[ \{  yC \cap D : y \in D \cdot C^{-1} \} = \{ \varphi_B(xA \cap B) : x \in B \cdot A^{-1} \}, \]
and since the latter family is clearly isomorphic to $\{ xA \cap B : x \in B\cdot A^{-1} \}$, this proves the lemma. To prove the claim, let us denote the image of $x \in B \cdot A^{-1}$ under the induced bijection $\varphi$ described above by $y_x$, so that we can rewrite the left-hand side of the claim as $\{ y_x \varphi_A(A) \cap \varphi_B(B) : x \in B \cdot A^{-1} \}$. We will thus be done upon showing that, for each $x = b_x a_x^{-1}$,
\[ y_x \varphi_A(A) \cap \varphi_B(B) = \varphi_B(xA \cap B). \]
To see this, note that $\varphi_B(b) = y_x \varphi_A(a) = \varphi_B(b_x)\varphi_A(a_x)^{-1}\varphi_A(a)$ holds iff $b = b_x a_x^{-1} a = xa$, by the Freiman isomorphism property, whence the claim follows.
\end{proof}

Note that the conclusion $\vcd{A} = \vcd{C}$ in fact holds under the weaker assumption that the pairs $(A,A)$ and $(C,C)$ are $2$-isomorphic, allowing one to pick different maps for the two copies of $A$.

There was an arbitrary choice in our definition of VC-dimension, corresponding to the same choice in our definition of convolution: we mostly consider multiplication acting on the group on the \emph{left}; thus our definition of $\vcd{A,B}$ might more appropriately be termed left VC-dimension. There is similarly a notion of right VC-dimension:

\begin{defn}
For $A,B \subset G$, we write $\vcdr{A,B} = \vcd{\{ Ax \cap B : x \in A^{-1} \cdot B \}}$, and just $\vcdr{A}$ if $B = A$, and call these \emph{right VC-dimensions}.
\end{defn}

Clearly $\vcdsym$ and $\vcdrsym$ coincide for abelian groups. In general we have the following relationship.

\begin{lemma}[$\vcdsym$ under inverses]
Let $A, B \subset G$. Then $\vcd{A^{-1}, B^{-1}} = \vcdr{A, B}$. In particular, if $G$ is abelian then $\vcd{-A} = \vcd{A}$.
\end{lemma}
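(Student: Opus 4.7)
The plan is to exploit the fact that the inversion map $\iota : G \to G$, $g \mapsto g^{-1}$, is a bijection of $G$ that converts left translates into right translates and vice versa. Since VC-dimension is preserved by applying a bijection to the ambient set (and correspondingly to every member of the family), this should immediately give the desired equality.

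Concretely, I would first spell out the two defining families:
\[ \vcd(A^{-1}, B^{-1}) = \vcd\,\bigl\{ A^{-1} \cap (xB^{-1}) : x \in A^{-1}\cdot B \bigr\}, \qquad \vcdr(A,B) = \vcd\,\bigl\{ A \cap (By) : y \in B^{-1}\cdot A \bigr\}. \]
Next, I would compute that $\iota(A^{-1} \cap xB^{-1}) = \iota(A^{-1}) \cap \iota(xB^{-1}) = A \cap Bx^{-1}$, using that inversion is an involution and that $(xB^{-1})^{-1} = Bx^{-1}$. Then I would observe that the substitution $y = x^{-1}$ is a bijection between the index sets, since $\iota(A^{-1} \cdot B) = B^{-1}\cdot A$. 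Hence, up to the relabelling $y = x^{-1}$ and applying $\iota$ to every set, the two families coincide.

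The final step is to note that $\iota$ being a bijection means that a subset $X \subset A^{-1}$ is shattered by the first family iff $\iota(X) = X^{-1} \subset A$ is shattered by the corresponding second family; this gives equality of the VC-dimensions. For the abelian addendum, I would simply apply the general statement with $B = A$, noting that $A^{-1} = -A$ in additive notation and that $\vcdr$ coincides with $\vcd$ in the abelian setting.

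There is no real obstacle here; the only thing to be careful about is bookkeeping the index sets so as to confirm that the bijection $y = x^{-1}$ sends the index set $A^{-1}\cdot B$ for the first family exactly onto the index set $B^{-1}\cdot A$ for the second, so that nothing is lost or gained when matching up the shattering witnesses.
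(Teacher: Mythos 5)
Your proposal is correct and is essentially the paper's own argument spelled out in full: the paper's one-line proof, that one family shatters $X$ iff the other shatters $X^{-1}$, is precisely the observation that the inversion bijection $\iota$ carries each member $A^{-1}\cap xB^{-1}$ to $A\cap Bx^{-1}$ and intertwines the index sets via $y=x^{-1}$, which is exactly the bookkeeping you carry out.
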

\begin{proof}
One of the families shatters a set $X$ iff the other shatters $X^{-1}$.
\end{proof}
Note that inversion is in general not a Freiman isomorphism, though it is for abelian groups.

\subsection*{Examples of sets with small VC-dimension}
Sets with VC-dimension $0$ have a straightforward description:

\begin{proposition}[Subgroups]\label{prop:vcd0}
Let $A \subset G$ be non-empty. Then $\vcd{A} = 0$ if and only if $A$ is a (left or right) coset of a subgroup.
\end{proposition}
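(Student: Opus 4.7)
The plan is to first characterise $\vcd(A) = 0$ as a combinatorial condition on the family $\mathcal{F}(A) \coloneqq \{A \cap (xA) : x \in A \cdot A^{-1}\}$, and then translate that condition into a group-theoretic one. For the combinatorial step, I would note that $1 \in A \cdot A^{-1}$ (since $A$ is non-empty), so $A = A \cap A \in \mathcal{F}(A)$. A singleton $\{a\} \subset A$ is shattered precisely when some $F \in \mathcal{F}(A)$ omits $a$ (the inclusion side being free since $a \in A \in \mathcal{F}(A)$). Consequently $\vcd(A) = 0$ if and only if every element of $\mathcal{F}(A)$ contains every $a \in A$, i.e.\ $A \subset xA$ for all $x \in A \cdot A^{-1}$. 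Since $A$ is finite and $xA$ has the same cardinality as $A$, this is equivalent to $xA = A$ for every $x \in A \cdot A^{-1}$.

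For the main implication, I would pick any basepoint $a_0 \in A$ and define $H \coloneqq A a_0^{-1}$; the claim is that $H$ is a subgroup of $G$. Clearly $1 = a_0 a_0^{-1} \in H$. Now, the condition $xA = A$ applied to $x = a a'^{-1}$ with $a, a' \in A$ gives $a a'^{-1} a_0 \in A$ for every $a, a' \in A$. Writing $h_1 = a a_0^{-1}$ and $h_2 = a' a_0^{-1}$ so that $a a'^{-1} a_0 = h_1 h_2^{-1} a_0$, this says $h_1 h_2^{-1} a_0 \in A$, i.e.\ $h_1 h_2^{-1} \in H$. Hence $H$ is closed under the operation $(h_1, h_2) \mapsto h_1 h_2^{-1}$, so $H$ is a subgroup, and $A = H a_0$ is a right coset of $H$. (Equivalently, $A = a_0 (a_0^{-1} H a_0)$ is a left coset of the conjugate subgroup $a_0^{-1} H a_0$, which takes care of the ``left or right'' alternative.)

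For the converse, I would verify directly that cosets have trivial $\vcd$. If $A = Hg$ with $H \leq G$, then $A \cdot A^{-1} = Hgg^{-1}H^{-1} = H$, and for every $x \in H$ one has $xA = xHg = Hg = A$, so $\mathcal{F}(A) = \{A\}$ and $\vcd(A) = 0$. The left-coset case reduces to this via the conjugation identity above (or by a symmetric direct check).

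The proof is essentially bookkeeping once the right combinatorial reformulation is in place; the only mildly subtle point is translating the stabiliser-like condition ``$xA = A$ for all $x \in A \cdot A^{-1}$'' into an actual subgroup, which requires choosing a basepoint and recognising that $A \cdot A^{-1}$ itself is the subgroup (and equals $H$ above). The cost of the ``left or right'' dichotomy is nil since any right coset is a left coset of a conjugate subgroup, so no separate argument is needed.
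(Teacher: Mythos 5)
Your proof is correct and follows essentially the same route as the paper's: reduce $\vcd(A)=0$ to the condition that the family $\{A\cap(xA):x\in A\cdot A^{-1}\}$ is the singleton $\{A\}$, i.e.\ $A\subset xA$ for every $x\in A\cdot A^{-1}$; upgrade this to $A=xA$; and identify $A$ as a coset. Your explicit basepoint argument (checking that $H=Aa_0^{-1}$ is closed under $(h_1,h_2)\mapsto h_1h_2^{-1}$) is a more hands-on presentation of the paper's terser observation that $A=\langle A\cdot A^{-1}\rangle t$; in fact your $H$ is exactly $A\cdot A^{-1}=\langle A\cdot A^{-1}\rangle$, so the two are the same construction.

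One small point: to pass from $A\subset xA$ to $A=xA$ you invoke finiteness of $A$ and a cardinality count, but the proposition as stated does not assume $A$ is finite. The paper instead uses the symmetry of $A\cdot A^{-1}$: since $x^{-1}\in A\cdot A^{-1}$ whenever $x$ is, $A\subset xA$ for all such $x$ forces $A\subset x^{-1}A$, i.e.\ $xA\subset A$, hence $xA=A$. This avoids any finiteness hypothesis and is the cleaner way to close that step.
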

\begin{proof}
A set system $\mathcal{A}$ has VC-dimension $0$ iff $\abs{\mathcal{A}} = 1$. Thus $\vcd{A} = 0$ iff 
\begin{equation*}
\{ (xA) \cap A : x \in A\cdot A^{-1} \} = \{ A \}, %\label{eqn:vcd0}
\end{equation*}
since $A \cdot A^{-1}$ contains the identity. This holds iff $A \subset xA$ for all $x \in A\cdot A^{-1}$, which by translation and symmetry of $A \cdot A^{-1}$ holds iff $A = xA$ for all $x \in A \cdot A^{-1}$. This is equivalent to $A = \langle A \cdot A^{-1} \rangle A$, which, fixing any $t \in A$, holds iff $A = \langle A \cdot A^{-1} \rangle t$.
\end{proof}

Groups like $\Z$ do not contain non-trivial finite subgroups, but they do contain plenty of interesting alternatives, like arithmetic progressions. In the below, we use interval notation to denote the corresponding real intervals intersected with $\Z$.

\begin{proposition}[Arithmetic progressions]\label{prop:APs}
Let $A \subset \Z$ be an arithmetic progression, with $\abs{A} \geq 3$. Then $\vcd{A} = 2 = \vcd{A,\Z}$.
\end{proposition}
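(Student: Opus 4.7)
The plan is to reduce, via Freiman $2$-isomorphism invariance (Lemma \ref{lemma:Freiman}), to the canonical case $A = \{0,1,\ldots,N-1\}$ with $N \geq 3$. Indeed, if $A = \{a + kd : 0 \leq k \leq N-1\}$ with $d \neq 0$, then the bijection $a + kd \mapsto k$ satisfies the Freiman $2$-isomorphism condition, because for integers $a_1, a_2, b_1, b_2$ in $A$, the equation $a_1 - b_1 = a_2 - b_2$ is equivalent to the corresponding equation of the $k$-coordinates (we can divide through by the nonzero $d$). Both $\vcd(A)$ and $\vcd(\Z,A)$ are preserved by this identification.

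After the normalisation, everything becomes an explicit computation. First I would enumerate the shifts: for any $x \in \Z$, the set $A \cap (x+A)$ equals the initial segment $[0, N-1+x]$ when $-(N-1) \leq x \leq 0$, the final segment $[x, N-1]$ when $0 \leq x \leq N-1$, and $\emptyset$ when $|x| \geq N$. Thus the family defining $\vcd(\Z,A)$ consists of all initial segments of $A$, all final segments of $A$, and $\emptyset$; restricting to $x \in A - A = [-(N-1), N-1]$ removes only the empty set.

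For the upper bound $\vcd(\Z,A) \leq 2$ (and hence $\vcd(A) \leq 2$ as well, since the latter family is contained in the former), I would observe that no triple $\{i,j,k\} \subset \Z$ with $i < j < k$ can be shattered: the subset $\{i,k\}$ cannot arise as a trace, because an initial segment $[0,m]$ containing $k$ must also contain $j$, a final segment $[m, N-1]$ containing $i$ must also contain $j$, and $\emptyset$ misses $i$. For the matching lower bound, I would exhibit the pair $\{0,1\}$ as shattered already by the family for $\vcd(A)$: the four traces $\emptyset, \{0\}, \{1\}, \{0,1\}$ are realised by taking $x = 2, \, -(N-1),\, 1,\, 0$ respectively, which correspond to the sets $[2, N-1], \{0\}, [1, N-1], A$, all produced by $x \in A-A$.

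I do not foresee a real obstacle. The only subtle point is the endpoint arithmetic: the choice $x = 2$ requires $2 \leq N-1$, which is exactly where the hypothesis $|A| \geq 3$ enters, and one must verify that restricting from $\Z$ to $A-A$ does not remove any of the four specific sets used for shattering $\{0,1\}$. Combining the two bounds yields $\vcd(A) = 2 = \vcd(\Z,A)$, as claimed.
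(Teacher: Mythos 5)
Your proof takes essentially the same route as the paper's: normalise to an interval, exhibit a shattered pair, and show no triple is shattered because intervals are order-convex. The endpoint arithmetic is correct, all four witnesses lie in $A-A$, and the hypothesis $\abs{A}\geq 3$ enters exactly where you identify.

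The one step that is not justified as written is the appeal to Lemma~\ref{lemma:Freiman} to conclude that $\vcd(\Z,A)=\vcd(\Z,[0,N-1])$. That lemma concerns Freiman $2$-isomorphisms between \emph{pairs}, so here it would require a bijection $\varphi_\Z:\Z\to\Z$ compatible (in the sense of Definition~\ref{defn:FreimanIso}) with your $\varphi_A:A\to[0,N-1]$; when the common difference satisfies $\abs{d}>1$ no such $\varphi_\Z$ exists, since the compatibility relations force $\varphi_\Z(x+d)=\varphi_\Z(x)+1$ for every $x\in\Z$, which is not injective. The equality you want is nonetheless true, but one must argue differently: a set shattered by $\{x+A:x\in\Z\}$ must (because the full trace is realised) lie inside some translate $x_0+A$, hence in a single residue class modulo $d$, and rescaling that coset turns translates of $A$ into translates of an interval. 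More cheaply, one can skip the normalisation for the upper bound entirely: if $a<b<c$ were shattered they would lie in one coset of $d\Z$, and any translate of $A$ --- an arithmetic progression with step $d$ --- that contains $a$ and $c$ must also contain $b$. The paper is itself informal here ("we may translate and dilate") without addressing $\vcd(\Z,A)$ separately, so this is a shared imprecision; I flag it only because you cited a specific lemma that does not cover this case.
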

\begin{proof}
For the first claim, we may translate and dilate the arithmetic progression to assume that $A = [0,N]$ with $N \geq 2$. The relevant set system is then
\[ \{ [0,0], [0,1], [0,2], \ldots, [0, N], [1, N], [2, N], \ldots, [N,N] \}. \]
This shatters the set $\{0,1\}$, and so $\vcd{A} \geq 2$. On the other hand, any collection of intervals cannot shatter a set of three elements, since if $a < b < c$, then any interval containing $a$ and $c$ automatically contains $b$. Thus $\vcd{A,\Z} \leq 2$, and so the result follows by monotonicity.
\end{proof}

\begin{proposition}[Boxes]\label{prop:boxes}
Let $A = [0,N_1]\times \cdots \times [0,N_d] \subset \Z^d$. Then $\vcd{A} \leq \vcd{A, \Z^d} \leq 2d$.
\end{proposition}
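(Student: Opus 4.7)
The first inequality $\vcd(A) \leq \vcd(\Z^d, A)$ is immediate from Proposition \ref{prop:monotonicity}{\itshape (iii)}, so the content is the second inequality. Unpacking the definition,
\[ \vcd(\Z^d, A) = \vcd\,\{ \Z^d \cap (x + A) : x \in \Z^d - A \} = \vcd\,\{ x + A : x \in \Z^d - A \}, \]
and each translate $x + A$ is the axis-aligned box $\prod_{i=1}^d [x_i, x_i + N_i]$. So this family is a subfamily of $\mathcal{B}_d$, the family of all axis-aligned boxes $\prod_{i=1}^d [a_i, b_i] \subset \Z^d$ (allowing empty factors). It therefore suffices to prove the classical fact $\vcd(\mathcal{B}_d) \leq 2d$.

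\textbf{VC-bound for boxes.} I would argue by contradiction using an extreme-point argument. Suppose $X \subset \Z^d$ is shattered by $\mathcal{B}_d$ and $\abs{X} \geq 2d+1$. For each coordinate $i \in [d]$, let $p_i^{\min}, p_i^{\max} \in X$ be points attaining the minimum and maximum $i$-th coordinate on $X$ (break ties arbitrarily). This gives at most $2d$ distinguished points, so there exists some $q \in X$ that is not extreme in any coordinate, meaning that for every $i$ the $i$-th coordinate of $q$ lies between those of $p_i^{\min}$ and $p_i^{\max}$. Now let $B = \prod_i [a_i, b_i] \in \mathcal{B}_d$ be any box realising $X \cap B = X \setminus \{q\}$. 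Then $B$ contains each $p_i^{\min}$ and $p_i^{\max}$, hence $[a_i, b_i]$ contains the full range of $i$-th coordinates on $X$; in particular it contains the $i$-th coordinate of $q$. Thus $q \in B$, contradicting $q \notin X \cap B$. Hence $\vcd(\mathcal{B}_d) \leq 2d$, giving the proposition.

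\textbf{Main obstacle.} There isn't really one here: the statement is a folklore VC-dimension bound combined with the observation that translates of a box are boxes. The only minor subtlety is handling the restriction $x \in \Z^d - A$ (versus arbitrary $x \in \Z^d$), but this only shrinks the family, so the bound for $\mathcal{B}_d$ applies directly.
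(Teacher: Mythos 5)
Your proof is correct and is essentially the paper's argument: both reduce to the classical $2d$ bound for axis-aligned boxes via the observation that at most $2d$ points of a shattered set determine its bounding box, and any further point cannot be separated from those. (One small phrasing quibble: what you actually use is that $q$ is not among the distinguished points $p_i^{\min},p_i^{\max}$, not that it is "not extreme in any coordinate"; the argument is fine either way.)
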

This can be sharpened, but we give the above bound as it is simple to prove, and we include the (standard) proof to give an idea of the style of argument.
\begin{proof}
We prove the stronger, and well-known, claim that the family $\mathcal{B}$ of axis-aligned boxes in $\R^d$ has VC-dimension at most $2d$. Define the box-span of a set $X \subset \R^d$ to be 
\[ \boxspan(X) = [ \min_{x \in X} x_1, \max_{x \in X} x_1] \times \cdots \times [ \min_{x \in X} x_d, \max_{x \in X} x_d ]; \]
this is the smallest axis-aligned box containing $X$. If $\mathcal{B}$ shatters a non-empty set $X$, let $Y = \{p_1, \ldots, p_{2d} \} \subset X$ be such that $\boxspan(X) = \boxspan(Y)$ --- such points exist since at most $2d$ numbers define the box-span. Then $\mathcal{B}$ cannot distinguish between $X$ and its subset $Y$, since any axis-aligned box containing $Y$ contains $X$. Thus $Y = X$ and $\abs{X} \leq 2d$.
\end{proof}

This immediately implies a bound for the natural analogue in $\Z$, namely generalised arithmetic progressions:

\begin{defn}\label{defn:GAP}
Let $G$ be an abelian group, and let $x_1, \ldots, x_d \in G$. A \emph{generalised arithmetic progression} of rank $d$ in $G$ is a set $A \subset G$ of the form
\[ A = a + \{ \lambda_1 x_1 + \cdots + \lambda_d x_d : \lambda_i \in [0,N_i) \}. \]
If $\abs{A} = N_1 \cdots N_d$, that is if all the sums are distinct, then $P$ is called \emph{proper}.
\end{defn}

\begin{proposition}[Generalised arithmetic progressions]
Let $A \subset \Z$ be a proper generalised arithmetic progression of rank $d$, with $A-A$ also proper\footnote{Viewed as a GAP with the same basis elements $x_1,\ldots,x_d$ in the natural way.}. Then $\vcd{A} \leq 2d$.
\end{proposition}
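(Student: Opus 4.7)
The plan is to reduce the statement to the box case (Proposition \ref{prop:boxes}) via a Freiman $2$-isomorphism, using Lemma \ref{lemma:Freiman}. Writing $A = a + \{ \lambda_1 x_1 + \cdots + \lambda_d x_d : \lambda_i \in [0,N_i)\}$, the natural candidate is the map $\varphi : A \to B$ defined by
\[ \varphi\brac{ a + \lambda_1 x_1 + \cdots + \lambda_d x_d } = (\lambda_1, \ldots, \lambda_d) \in B \coloneqq [0, N_1) \times \cdots \times [0, N_d) \subset \Z^d, \]
which is well-defined and bijective precisely because $A$ is proper.

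The first step is to verify that $\varphi$ is a Freiman $2$-isomorphism from $A$ to $B$. For any $a_1, a_2, b_1, b_2 \in A$ with coefficient vectors $\tup{\lambda}^{(1)}, \tup{\lambda}^{(2)}, \tup{\mu}^{(1)}, \tup{\mu}^{(2)} \in \prod_i [0,N_i)$, we have
\[ a_1 - b_1 = a_2 - b_2 \iff \sum_{j=1}^d \brac{\lambda_j^{(1)} - \mu_j^{(1)} - \lambda_j^{(2)} + \mu_j^{(2)}} x_j = 0. \]
Since $A - A$ is assumed proper (with the same basis $x_1, \ldots, x_d$), the representation of any element of $A - A$ as $\sum_j \nu_j x_j$ with $\nu_j \in (-N_j, N_j)$ is unique, so the above vanishes iff $\lambda_j^{(1)} - \mu_j^{(1)} = \lambda_j^{(2)} - \mu_j^{(2)}$ for each $j$, which is exactly the condition $\varphi(a_1) - \varphi(b_1) = \varphi(a_2) - \varphi(b_2)$ in $\Z^d$. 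Thus the Freiman $2$-isomorphism condition holds.

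With the isomorphism established, Lemma \ref{lemma:Freiman} gives $\vcd(A) = \vcd(B)$, and Proposition \ref{prop:boxes} yields $\vcd(B) \leq 2d$, completing the proof. The only genuine content is the properness bookkeeping in step one; everything else is invoking results already in hand. There is no real obstacle here beyond making sure the properness hypothesis on $A-A$ is used to secure the injectivity needed for the Freiman condition (rather than merely the properness of $A$, which would only ensure that $\varphi$ itself is a bijection of sets).
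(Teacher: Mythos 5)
Your proposal is correct and takes exactly the route the paper does: reduce to the box case via the natural coordinate map and invoke Freiman invariance of $\vcd$ (the paper simply says the map ``is a Freiman isomorphism by properness'' and cites Lemma \ref{lemma:Freiman} and Proposition \ref{prop:boxes}). One tiny point of rigour in your verification: the combined coefficient $\nu_j = \lambda_j^{(1)} - \mu_j^{(1)} - \lambda_j^{(2)} + \mu_j^{(2)}$ may lie as far out as $\pm 2(N_j-1)$, outside the range $(-N_j,N_j)$ where properness of $A-A$ gives uniqueness, so one should not apply uniqueness to ``$\sum_j \nu_j x_j = 0$'' directly; instead apply it to the equality $\sum_j (\lambda_j^{(1)}-\mu_j^{(1)}) x_j = \sum_j (\lambda_j^{(2)}-\mu_j^{(2)}) x_j$, where both sides are in-range representations of a single element of $A-A$, which forces coefficient-wise equality. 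This is a phrasing fix rather than a gap --- the argument and conclusion are right.
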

\begin{proof}
This follows from Lemma \ref{lemma:Freiman} --- the invariance of $\vcdsym$ under Freiman isomorphism --- and Proposition \ref{prop:boxes}, as the set $A$ as in Definition \ref{defn:GAP} is the image of $[0,N_1)\times\cdots \times [0,N_d)$ under the obvious map, which is a Freiman isomorphism by properness, and so $\vcd{A} = \vcd{{[0,N_1)\times\cdots \times [0,N_d)}} \leq 2d$. 
\end{proof}

We refer the reader to \cite[Chapter 3.2]{TV} for more information on generalised arithmetic progressions.

In fact, all of the above upper bounds follow from the following example, which also shows that properness is not needed above.

\begin{proposition}[Bohr sets]
Let $G$ be an abelian group. If $A$ is a Bohr set of rank $d$ and radius $< \sqrt{2}$ in $G$, then $\vcd{A} \leq \vcd{A, G} \leq 2d$.
\end{proposition}
\begin{proof}
This follows for much the same reason as the box example earlier, as a Bohr set is the inverse image of a $d$-dimensional box under a certain homomorphism. Deducing it from that result appears to be somewhat involved due to lack of properness, so we argue directly. Suppose $A = \Bohr(\Gamma, \delta)$ where $\abs{\Gamma} = d$ and $\delta < \sqrt{2}$. Suppose for a contradiction that the family
\[ \{ t+A : t \in G \} \quad\text{shatters}\quad X, \]
a set of size $\abs{X} \geq 2d+1$. Without loss of generality (by translation) we assume that $X \subset A$, so that
\[ \abs{\gamma(x) - 1} \leq \delta \quad\text{for all $\gamma \in \Gamma$ and all $x \in X$.} \]
In particular, since $\delta < \sqrt{2}$ this means that all the points $\gamma(x)$ lie on the same half-circumference of the unit circle, for every $\gamma$ and $x$. Also, for every $x \in X$ there is, by shattering, some $t_x$ such that
\[ (t_x + A) \cap X = X\setminus\{x\}. \]
Thus $\abs{\gamma(y-t_x) - 1} \leq \delta$ for all $y \in X\setminus\{x\}$ and $\gamma \in \Gamma$, but there is some $\gamma_x \in \Gamma$ for which $\abs{\gamma_x(x-t_x) - 1} > \delta$. Since we have $\abs{X} \geq 2d+1$ choices for $x$ but only $d$ choices for $\gamma_x$, there must be distinct $x_1, x_2, x_3 \in X$ for which these $\gamma_{x_j}$s are all the same, say $\gamma$; let us also call the corresponding elements $t_{x_j}$ just $t_j$. We thus have
\begin{align*}
\abs{\gamma(x_1) - \gamma(t_1)} &> \delta \\
\abs{\gamma(x_2) - \gamma(t_1)} &\leq \delta \\
\abs{\gamma(x_3) - \gamma(t_1)} &\leq \delta
\end{align*}
and the same for any permutation of the indices $1,2,3$. Viewed as points on the unit circle, the above three relations determine that $\gamma(x_1)$ does not lie along the shortest arc joining $\gamma(x_2)$ and $\gamma(x_3)$. By symmetry, the same is true for the other permutations, which is a contradiction, as one of the three points must lie between the other two.
\end{proof}

A different type of example, defined not in terms of a given structure but instead in terms of lacking a particular structure, comes from Sidon sets:

\begin{defn}
A subset $A$ of a group $G$ is called a \emph{Sidon set} if the only solutions to $a_1 a_2^{-1} = a_3 a_4^{-1}$ with $a_i \in A$ are the trivial ones with $a_1 = a_2$ or $a_1 = a_3$.
\end{defn}

\begin{proposition}[Sidon sets]\label{prop:Sidon}
If $A$ is a Sidon set, then $\vcd{A} \leq \vcd{A,G} \leq 2$.
\end{proposition}
\begin{proof}
Suppose for a contradiction that the family $\{tA : t \in G\}$ shatters a set $\{a,b,c\}$ of three elements; wlog $a,b,c \in A$ (by translating if necessary). Then there is some element $t$ such that
\[ (tA) \cap \{a,b,c\} = \{a,b\}. \]
Hence there are $x,y \in A$ such that $tx = a$ and $ty = b$. Hence $ax^{-1} = by^{-1}$, and since $A$ is a Sidon set and $a$ and $b$ are distinct we have $a = x$, and so $t = \text{id}$. But this contradicts the fact that $c \notin tA$, and we are done.
\end{proof}

\section{Arithmetic regularity and structures in difference sets}\label{section:AR}
In this section we apply the uniform continuity result to prove Theorems \ref{thm:AR} and \ref{thm:ARBohrSimple} --- our VC-dimension versions of the arithmetic regularity lemma of Terry and Wolf --- as well as prove the corollaries of Bogolyubov--Ruzsa type.

We shall in fact prove the following more general result.

\begin{theorem}[Arithmetic regularity, Bohr set version]\label{thm:ARBohr}
Let $\epsilon \in (0,1)$, $\nu \in [0,1]$, and let $G$ be a finite abelian group. If $A \subset G$ has size least $\alpha \abs{G}$ and $\vcd{A} \leq d$, then there is a regular Bohr set $H \subset A-A$ of rank $m \leq C d \epsilon^{-C}\log(2/\alpha)$ and radius $c\nu\epsilon^2/m^2$, and a subset $A' \subset A$, such that $\abs{ A \symmdiff (A'+H) } \leq \epsilon \abs{A}$.\\
Moreover, we may take $\abs{A'} \geq (1-\epsilon)\abs{A}$, and there is some radius-dilate $D = Cm/\epsilon\nu$ such that $\abs{A \cap (x+H_D)} \geq (1-\epsilon)\abs{H_D}$ for all $x \in A'+H$.
\end{theorem}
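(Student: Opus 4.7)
The plan is to apply Theorem \ref{thm:mainAbelian} with $B = A$, $S = G$ (so $K \leq 1/\alpha$, $\eta = 1$, $\mu_G(S) = 1$) and a small parameter $\epsilon_0 = c\epsilon^2$, producing a regular Bohr set $H^*$ of rank $m \leq Cd\epsilon^{-C}\log(2/\alpha)$ and radius $\rho^* \geq c\epsilon^2/m$ such that $\phi \coloneqq \mu_A\circ 1_A$ satisfies $\norm{\phi(\cdot+t) - \phi}_\infty \leq \epsilon_0$ for every $t \in H^*$. Since $\phi(0) = 1$, this gives $\phi(h) \geq 1-\epsilon_0 > 0$ for every $h \in H^*$, and hence $H^* \subset \supp(\phi) = A-A$; any sub-Bohr-set $H \subset H^*$ will inherit $H \subset A-A$.

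I would then pass to two nested regular dilates: a larger Bohr set $H_D = (H^*)_{\tau_1}$ with $\tau_1 \in [1/2,1]$ chosen via Lemma \ref{lemma:bohrReg} for regularity, and the smaller $H = (H_D)_{\tau_2/D}$ with $D = Cm/\epsilon$ and $\tau_2 \in [1/2,1]$ again for regularity. The point of this choice is that every $h \in H$ lies in the regularity window $(H_D)_{\delta}$ with $\delta \leq \epsilon/(Cm)$, so Lemma \ref{lemma:regConv} applied to $H_D$ yields $\sum_z \abs{\mu_{H_D}(z+h) - \mu_{H_D}(z)} \leq \epsilon/4$. The resulting radius of $H$ is $\Omega(\epsilon^3/m^2)$, which is of the claimed form $c\nu\epsilon^2/m^2$ once one factor of $\epsilon$ is absorbed into the slack parameter $\nu$.

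The heart of the argument is a short computation: expanding $\langle 1_A*\mu_{H_D},1_A\rangle = \abs{A}\,\E_{h \in H_D}\phi(-h)$ and using uniform continuity gives
\[ \sum_{x \in A}\brac{1 - 1_A*\mu_{H_D}(x)} \leq \epsilon_0\abs{A} \quad\text{and}\quad \sum_{x \notin A} 1_A*\mu_{H_D}(x) \leq \epsilon_0\abs{A}. \]
Define $A' \coloneqq A \cap \{x : 1_A*\mu_{H_D}(x) \geq 1-\epsilon/4\}$; Markov gives $\abs{A \setminus A'} \leq 4\epsilon_0\abs{A}/\epsilon \leq \epsilon\abs{A}$ for our $\epsilon_0$. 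For any $x = a+h \in A'+H$, the regularity estimate above yields $\abs{1_A*\mu_{H_D}(x) - 1_A*\mu_{H_D}(a)} \leq \epsilon/4$, so $1_A*\mu_{H_D}(x) \geq 1-\epsilon/2$; this is precisely the ``moreover'' density statement $\abs{A \cap (x+H_D)} \geq (1-\epsilon)\abs{H_D}$. It also confines $A'+H$ to the superlevel set $\{1_A*\mu_{H_D} \geq 1-\epsilon/2\}$, so the second Markov bound yields $\abs{(A'+H) \setminus A} \leq C\epsilon_0\abs{A} \leq \epsilon\abs{A}$. Combined with $\abs{A \setminus (A'+H)} \leq \abs{A \setminus A'}$ (since $A' \subset A'+H$), this gives $\abs{A \symmdiff (A'+H)} \leq \epsilon\abs{A}$ after a final adjustment of constants.

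The main obstacle I anticipate is bookkeeping rather than any deep step: one must choose $\epsilon_0$, $\tau_1$, $\tau_2$ and $D$ so that (i) the rank bound $m \leq Cd\epsilon^{-C}\log(2/\alpha)$ absorbs all the losses coming from $\epsilon_0 \sim \epsilon^2$ together with the Chang-lemma logarithms inside Theorem \ref{thm:mainAbelian}, (ii) both $H_D$ and $H$ can be simultaneously taken regular with $H_D \subset H^*$ and $H = (H_D)_{\tau_2/D}$, and (iii) the final radius of $H$ matches the claimed $c\nu\epsilon^2/m^2$. The role of the slack parameter $\nu \in [0,1]$ in the statement is exactly to accommodate the extra factor of $\epsilon$ produced by the regularity step.
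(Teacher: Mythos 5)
Your argument is essentially the same as the paper's: apply Theorem \ref{thm:mainAbelian} to $\phi=\mu_A\circ 1_A$ with a parameter of order $\epsilon^2$, observe that the resulting almost-period Bohr set $T$ sits inside $A-A$ because $\phi(0)=1$, average to get $\E_{a\in A}1_A*\mu_T(a)\geq 1-\epsilon_0$, cut out $A'$ by Markov, and then use Lemma \ref{lemma:regConv} on a small regular dilate $H$ to push the density bound from $A'$ to all of $A'+H$ and control $\abs{A\symmdiff(A'+H)}$. The only substantive difference is the choice of Markov thresholds (the paper uses $\delta^{1/2}$-scale cutoffs with $\delta=\epsilon^2/100$; you use $\epsilon/4$-scale cutoffs), which is cosmetic.

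The one thing you get wrong, and it is a genuine (if small) gap, is the role of $\nu$. You write that $\nu\in[0,1]$ is ``exactly to accommodate the extra factor of $\epsilon$ produced by the regularity step,'' i.e.\ you treat $\nu$ as a slack knob you are free to set, and accordingly you construct a single $H$ of fixed radius $\Theta(\epsilon^3/m^2)$. But $\nu$ is part of the hypothesis: the theorem must produce, for every given $\nu\in[0,1]$, a Bohr set of radius $c\nu\epsilon^2/m^2$. The entire reason $\nu$ appears is to allow $\nu=0$, in which case $H=\Bohr(\Gamma,0)=\Gamma^\perp$ is a genuine subgroup; this is precisely how the paper deduces \Cref{thm:AR} and \Cref{thm:ARdoubling}. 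Your construction always has positive radius and never produces the subgroup case. The fix is exactly the paper's: choose the dilate $\tau=c\nu\delta^{1/2}/m$ proportionally to $\nu$ (so that $\tau=0$ when $\nu=0$), noting that Lemma \ref{lemma:regConv} still applies --- trivially with zero error when $\nu=0$, since translating by $t\in\Gamma^\perp$ fixes $\mu_T$ pointwise --- and that radius-zero Bohr sets are automatically regular. Once you make $H$ depend on $\nu$ in this way the rest of your computation goes through verbatim.
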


\Cref{thm:AR} is an immediate consequence of this, taking $\nu = 0$, so that $H$ is a subspace of the required codimension by \Cref{lemma:BohrStructure}.

We remark that if one assumes the more general condition $\abs{A+S} \leq K \abs{A}$ instead of the density condition, then one gets the same conclusion but with the bound $m \leq C d \epsilon^{-C}\log K + C\log(2\abs{G}/\abs{S})$ on the rank instead; this comes from the bound in Theorem \ref{thm:mainAbelian}. Recall also that an improved $\epsilon$-dependence is discussed in Section \ref{section:aftermath}.

\begin{proof}[Proof of \Cref{thm:ARBohr}]
We apply Theorem \ref{thm:mainAbelian} with $B = A$ and some parameter $\delta = \delta(\epsilon) \leq 1/10$ to be specified later in place of $\epsilon$, taking $S = G$ and $K = 1/\alpha$. This gives us a regular Bohr set $T$ of rank $m \leq C d \delta^{-2}(\log 2/\delta)^2 \log(2/\alpha)$ and radius $c\epsilon /m$ such that 
\[ \abs{\mu_A * 1_{-A}(x+t) - \mu_A * 1_{-A}(x)} \leq \delta \quad\text{for all $x \in G$ and $t \in T$.} \] 
Since $\mu_A * 1_{-A}(0) = \E_{a \in A} 1_A(a) = 1$, looking at just $x = 0$ tells us that $\mu_A * 1_{-A}(t) \geq 1-\delta$ for all $t \in T$. Note in particular that $T \subset A-A$; recall also that $T=-T$. Averaging the previous inequality over all $t \in T$, we have
\[ \mu_A * 1_{-A} * \mu_T(0) \geq 1-\delta, \]
and so
\[ \E_{a \in A}\, 1_A*\mu_T(a) \geq 1-\delta. \]
Thus, on average over $a \in A$, $1_A*\mu_T(a) = \abs{A \cap (a+T)}/\abs{T} \geq 1-\delta$. Let 
\[ A' = \{ a \in A : 1_A*\mu_T(a) \geq 1 - \delta^{1/2} \} \subset A. \]
By Markov's inequality, $\mu_A(A') \geq 1-\delta^{1/2}$, since $1_A*\mu_T(a) \leq 1$ for all $a$. Let $H = T_\tau$, where $\tau = c \nu\delta^{1/2}/m$ is picked so that $H$ is regular using Lemma \ref{lemma:bohrReg}. Lemma \ref{lemma:regConv} then yields
\[ \abs{1_A*\mu_T(a+t) - 1_A*\mu_T(a)} \leq \sum_{x \in G} \abs{\mu_T(x+t) - \mu_T(x)} \leq \delta^{1/2} \]
for every $t \in H$ and every $a \in G$. Then
\begin{equation}
1_A*\mu_T(x) \geq 1-2\delta^{1/2} \quad \text{for all $x \in A'+H$.} \label{eqn:TH}
\end{equation}
Equivalently, $\left(1-2\delta^{1/2}\right) 1_{A'+H} \leq 1_A*\mu_T$, and summing this over the whole group, using the fact that $\norm{1_A*\mu_T}_1 = \norm{1_A}_1 \norm{\mu_T}_1 = \abs{A}$, yields that
\[ \left(1-2\delta^{1/2}\right) \abs{A'+H} \leq \abs{A}. \]
Since $\abs{A \cap (A'+H)} \geq \abs{A'} \geq \left(1 - \delta^{1/2}\right)\abs{A}$, we thus have
\[ \abs{A \symmdiff (A'+H)} = \abs{A} + \abs{A'+H} - 2\abs{A \cap (A'+H)} \leq 10 \delta^{1/2} \abs{A}. \]
Taking $\delta = \epsilon^2/100$ thus yields the result. Note that the claims in the ``moreover'' part of the theorem statement are satisfied, as $\mu_A(A') \geq 1-\delta^{1/2}$ and, with $D = 1/\tau$, $H_D = T$ works by \eqref{eqn:TH}.
\end{proof}

\begin{remark}\label{rmk:translateControl}
In the context of bounded exponent groups or $\F_q^n$, one may take $H \subset T$ to be a large subgroup or subspace, respectively, by Lemma \ref{lemma:BohrStructure}, and one may then replace $H_D$ with just $H$ in the statement of the theorem, so that 
\[ \abs{A \cap (x+H)} \geq (1-\epsilon)\abs{H} \quad\text{for all $x \in A'+H$.} \]
The intersection is of course empty for $x \notin A+H$; thus we have control of $A$ on most translates of $H$.
\end{remark}

\begin{remark}
One could, using known ideas, work with dense subsets of Bohr sets here instead of assuming density in the overall group, but we do not have any applications in mind.
\end{remark}

Let us also record the following small doubling version valid over finite fields.

\begin{theorem}[Arithmetic regularity, small doubling version]\label{thm:ARdoubling}
Let $\epsilon \in (0,1)$, and let $G = \F_q^n$. If $A \subset G$ has $\abs{A-A}\leq K\abs{A}$ and $\vcd{A} \leq d$, then there is subspace $H \subset A-A$ of $(A-A)$-codimension at most $C d \epsilon^{-C}\log K  + C\log q$ and a union $W$ of cosets of $H$, contained in $A+H$, such that $\abs{ A \symmdiff W } \leq \epsilon \abs{A}$.
\end{theorem}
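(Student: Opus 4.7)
The plan is to reduce \Cref{thm:ARdoubling} to the density case \Cref{thm:AR} via a Freiman modelling step. Pl\"unnecke--Ruzsa gives $\abs{3A-3A} \leq K^C\abs{A}$, so a standard random-linear-map argument (the modelling lemma included earlier in this section) produces a surjective $\F_q$-linear map $\pi \colon \F_q^n \to \F_q^m$ with $q^m \leq CqK^C\abs{A}$ that is injective on $3A-3A$, hence a Freiman $3$-isomorphism from $A$ onto $A' := \pi(A) \subset \F_q^m$. By \Cref{lemma:Freiman}, $\vcd(A') \leq d$, $\abs{A'-A'} = \abs{A-A}$, and $\mu_{\F_q^m}(A') \geq cK^{-C}/q$.

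Next I would re-run the argument behind \Cref{thm:ARBohr} (which underlies \Cref{thm:AR}) for $A'$, applying \Cref{thm:mainAbelian} with $S = A'$ rather than $S = \F_q^m$: since $\abs{A'+A'} \leq K^C\abs{A'}$ by Pl\"unnecke, the rank of the resulting Bohr set splits cleanly into a $Cd\epsilon^{-C}\log K$ contribution from the main term of \Cref{thm:mainAbelian} and a $C\log(2/\mu_{\F_q^m}(A')) \leq C\log q + C\log K$ contribution from the density term. Taking $\nu=0$ turns this Bohr set into a genuine subspace $H' \subset A'-A'$ of $\F_q^m$ of codimension at most $Cd\epsilon^{-C}\log K + C\log q$, and the argument simultaneously furnishes a witness subset $A^* \subset A'$ with $\abs{A' \symmdiff (A^* + H')} \leq \epsilon\abs{A'}$ and $A^* + H' \subset A' + H'$.

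Finally I would pull this conclusion back through $\pi$. Let $\sigma \colon A'-A' \to A-A$ be the inverse of the bijection $\pi|_{A-A}$, and set $H := \sigma(H')$. The $3$-isomorphism makes $\sigma$ additive on $H'$: for $h'_1, h'_2 \in H'$, the quantity $\sigma(h'_1+h'_2) - \sigma(h'_1) - \sigma(h'_2)$ lies in $(A-A) + (2A-2A) = 3A-3A$ and in $\ker\pi$, hence equals $0$. Thus $H$ is an additive subgroup of $\F_q^n$ contained in $A-A$, with $\abs{H} = \abs{H'}$, and since $\abs{A-A} = \abs{A'-A'}$ its $(A-A)$-codimension inherits the bound from that of $H'$ in $\F_q^m$. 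Setting $\tilde A \subset A$ to be the $\pi$-preimage of $A^*$ and $W := \tilde A + H \subset A + H$, the injectivity of $\pi$ on $2A-A$ (also a consequence of the $3$-isomorphism, since $(2A-A) - (2A-A) \subset 3A-3A$) gives $\pi(\tilde A + H) = A^* + H'$ without collisions, whence $\abs{A \symmdiff W} = \abs{A' \symmdiff (A^* + H')} \leq \epsilon\abs{A}$, and $W$ is a union of cosets of $H$ by construction. The principal obstacle is precisely this pullback step: a Freiman $2$-isomorphism alone would yield only a set-bijection $H' \leftrightarrow H$ without additivity, which is why the modelling is arranged to produce a $3$-isomorphism, at the same asymptotic cost in the model size.
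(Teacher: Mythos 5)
Your proposal is correct and follows exactly the route the paper gestures at for \Cref{thm:ARdoubling}: Freiman-model $A$ into a dense subset of a small $\F_q^m$ via \Cref{lemma:model}, apply the density-regime regularity statement (in the $\abs{A+S}\leq K\abs{A}$ form noted after \Cref{thm:ARBohr}) there, then pull the subspace and the union of cosets back through the modelling map. The only deviation from the paper's companion argument (the proof of \Cref{cor:PBR}) is cosmetic: you model with a $3$-isomorphism and verify additivity of $\sigma$ on $H'$ by hand using $\ker\pi\cap(3A-3A)=\{0\}$, whereas the paper uses a $4$-isomorphism so that the induced map on $A-A$ is itself a Freiman $2$-isomorphism and hence automatically carries subgroups to subgroups; both give the stated bound.
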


There are several routes to get this kind of result; this specific version follows using a modelling lemma like Lemma \ref{lemma:model}, allowing one to Freiman-isomorphically embed a set $A \subset \F_q^n$ with $\abs{A-A} \leq K\abs{A}$ into a group whose size is at most $q\,C(K)\abs{A}$, at which point the tools of the density world apply. We do not detail this here, but we shall use a similar argument shortly.

\subsection*{Structures in difference sets}

We turn now to Corollaries \ref{cor:PBR} and \ref{cor:densePBR}. Again we prove a generalisation in terms of Bohr sets. Throughout this section, $G$ represents a finite abelian group.

\begin{theorem}\label{thm:PBRBohr}
If $A \subset G$ has size at least $\alpha \abs{G}$ and $\vcd{A} \leq d$, then $A-A$ contains a Bohr set of rank at most $m \leq Cd \log(2/\alpha)$ and radius at least $c/m$.
\end{theorem}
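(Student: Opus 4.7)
The plan is to derive this as a near-immediate consequence of Theorem \ref{thm:mainAbelian}, exploiting the same mechanism used in the derivation of Corollary \ref{cor:PBR}: namely, that $\mu_A\circ 1_A$ attains the value $1$ at the origin, and any $L^\infty$ uniform-continuity estimate of size strictly less than $1$ therefore forces this convolution to be positive, hence nonzero, on the set of almost-periods.

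Concretely, I would apply Theorem \ref{thm:mainAbelian} with $B=A$, $\eta = 1$, $S = G$ (so $K = 1/\alpha$ and $\mu_G(S) = 1$), and $\epsilon = \tfrac12$ (or any fixed constant strictly less than $1$). The rank bound in that theorem becomes
\[
m \leq C d \cdot \tfrac{1}{(1/2)^2} \cdot \log(1/\alpha) \cdot \bigl(\log 4\bigr)^2 + C\log 2 \leq C d\log(2/\alpha),
\]
and the radius becomes at least $c \cdot \tfrac12 \cdot 1 / m \geq c/m$. The conclusion is a regular Bohr set $T$ of the desired rank and radius satisfying $\norm{\mu_A\circ 1_A(\cdot+t)-\mu_A\circ 1_A}_\infty \leq \tfrac12$ for every $t \in T$.

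To conclude that $T \subset A-A$, fix $t \in T$. Evaluating the uniform estimate at the point $-t$ yields
\[
\bigl|\mu_A\circ 1_A(0) - \mu_A\circ 1_A(-t)\bigr| \leq \tfrac12.
\]
Since $\mu_A\circ 1_A(0) = |A|/|A| = 1$, we get $\mu_A\circ 1_A(-t) \geq \tfrac12 > 0$, so $-t \in \supp(\mu_A\circ 1_A) = A - A$. Bohr sets are symmetric (indeed $|\gamma(-x)-1| = |\gamma(x)-1|$ since $|\gamma(x)|=1$), so $t \in A-A$ as well, and hence $T \subset A-A$.

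There is no real obstacle here: the statement is essentially a repackaging of Theorem \ref{thm:mainAbelian} at $\epsilon = \tfrac12$, $B=A$. The only thing to verify with a little care is that the parameters of the Bohr set output by that theorem simplify to the stated bounds when $\eta = 1$, $K = 1/\alpha$, and $\mu_G(S)=1$; once that bookkeeping is done, the positivity of $\mu_A \circ 1_A$ at $-t$ is immediate from $\mu_A\circ 1_A(0)=1$.
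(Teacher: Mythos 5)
Your proof is correct and follows essentially the same route as the paper: apply Theorem \ref{thm:mainAbelian} with $B=A$, $\epsilon=\tfrac12$, $S=G$, and use $\mu_A\circ 1_A(0)=1$ to conclude the almost-periods lie in $A-A$. The only cosmetic difference is that the paper evaluates the almost-periodicity estimate at $x=0$, yielding $\mu_A\circ 1_A(t)\geq\tfrac12$ directly, whereas you evaluate at $x=-t$ and then invoke the symmetry of Bohr sets; both are equally valid.
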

We remark again that if one assumes $\abs{A+S} \leq K\abs{A}$ instead of the density condition, then one gets the same conclusion but with rank $m \leq Cd\log{K} + C\log(2\abs{G}/\abs{S})$, coming from the application of Theorem \ref{thm:mainAbelian}. Note also that the result almost follows directly from Theorem \ref{thm:ARBohr}, taking $\epsilon = 1/2$, but with slightly worse radius. A direct proof from Theorem \ref{thm:mainAbelian} is, however, much simpler:
\begin{proof}
Applying our main theorem, Theorem \ref{thm:mainAbelian}, with $\epsilon = 1/2$, $B=A$, $S = G$ and $K=1/\alpha$, we get a Bohr set $T$ of the required rank and radius such that, for each $t \in T$ and $x \in G$,
\[ \abs{ \mu_A* 1_{-A}(x+t) - \mu_A* 1_{-A}(x) } \leq 1/2. \]
Taking $x = 0$ and using that $\mu_A * 1_{-A}(0) = 1$, we see that $\mu_A* 1_{-A}(t) \geq 1/2$ for all $t \in T$. In particular, $t \in A-A$ for all $t \in T$, and so we are done.
\end{proof}
The almost-periodicity result of course says more: it says that not only is $T$ contained in $A-A$, but every element of $T$ is \emph{well-represented} as a difference of elements in $A$.

Corollary \ref{cor:densePBR} follows immediately from the above theorem and Lemma \ref{lemma:BohrStructure}. To prove the small doubling variant, Corollary \ref{cor:PBR}, we shall use the following lemma, which is well-known in the additive combinatorics community. It is a slight generalisation of the Green--Ruzsa modelling lemma \cite[Proposition 6.1]{GrRu:2007}, and the proof follows that in \cite{GrRu:2007}.

\begin{lemma}[Freiman modelling over finite fields]\label{lemma:model}
Let $s \geq 2$. Suppose $A \subset \F_q^n$ has $\abs{A-A}\leq K\abs{A}$ or $\abs{A+A} \leq K\abs{A}$. Then $A$ is Freiman $s$-isomorphic to a subset of $G = \F_q^m$, where $\abs{G} \leq (q-1) K^{2s} \abs{A}$.
\end{lemma}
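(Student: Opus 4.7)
The plan is to construct an $\F_q$-linear map $\pi : \F_q^n \to \F_q^m$ with $m$ small enough that $q^m \leq q K^{2s}\abs{A}$, such that the restriction $\pi|_A$ is the sought Freiman $s$-isomorphism onto $\pi(A) \subset \F_q^m$. The key algebraic observation is that a group homomorphism $\pi$ restricts to a Freiman $s$-isomorphism on $A$ precisely when $\ker\pi \cap (sA - sA) = \{0\}$: one direction of the $s$-isomorphism condition is automatic from $\pi$ being a homomorphism, while the converse reads ``$\pi(\sum_i a_i - \sum_i b_i) = 0$ implies $\sum_i a_i = \sum_i b_i$'', and since $\sum_i a_i - \sum_i b_i \in sA - sA$ this is exactly the stated kernel condition. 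Injectivity of $\pi|_A$ is a special case, as $A - A \subset sA - sA$ (pick any $a_0 \in A$ and write $a - b = (a + (s-1)a_0) - (b + (s-1)a_0)$).

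To bound $\abs{sA - sA}$, I would invoke the Plünnecke--Ruzsa inequality. Under $\abs{A+A} \leq K\abs{A}$ this yields $\abs{sA - sA} \leq K^{2s}\abs{A}$ directly. Under $\abs{A - A} \leq K\abs{A}$, the same bound follows by applying Plünnecke--Ruzsa to the pair $(A, -A)$, reading the hypothesis as $\abs{A + (-A)} \leq K\abs{A}$.

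To produce $\pi$, I would use the probabilistic method: let $\pi$ be represented by an $m \times n$ matrix with i.i.d.\ uniform entries in $\F_q$. For any fixed nonzero $x \in \F_q^n$, $\pi(x)$ is uniformly distributed on $\F_q^m$, so $\P[\pi(x) = 0] = q^{-m}$. A union bound yields
\[ \P\bigl[\ker\pi \cap \bigl((sA-sA) \setminus \{0\}\bigr) \neq \emptyset\bigr] \leq (\abs{sA - sA} - 1)\,q^{-m}. \]
Choosing $m$ to be the smallest integer with $q^m \geq \abs{sA - sA}$ makes this probability strictly less than $1$, so a suitable $\pi$ exists; by minimality $q^{m-1} < \abs{sA-sA}$, giving $q^m \leq q\abs{sA - sA} \leq q K^{2s}\abs{A}$, as required.

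The argument is essentially routine; the only mild obstacle is confirming the clean $K^{2s}$ exponent in the Plünnecke--Ruzsa bound under the difference-set hypothesis, where a naive route via Ruzsa's triangle inequality (deducing $\abs{A+A} \leq K^2\abs{A}$ first and then applying the sumset form) would lose a factor and produce only $K^{4s}$.
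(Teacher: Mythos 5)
Your proof is correct and takes essentially the same approach as the paper: find a linear map $\F_q^n \to \F_q^m$ whose kernel misses every nonzero element of $sA-sA$ by a probabilistic union bound, then control $\abs{sA-sA}$ via Pl\"unnecke--Ruzsa and take the smallest admissible $m$. The only cosmetic difference is that you sample a uniformly random $m \times n$ matrix whereas the paper samples a uniformly random subspace of codimension $m$ and uses the quotient map; the per-element probability bound (at most $q^{-m}$) and the resulting choice of $m$ coincide, and your remark about reading the difference-set hypothesis as $\abs{A+(-A)} \leq K\abs{A}$ to get the clean $K^{2s}$ exponent is exactly how the cited form of Pl\"unnecke--Ruzsa is intended to be applied.
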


The definition of Freiman $s$-isomorphism is similar to that of $2$-isomorphism given in Definition \ref{defn:FreimanIso}: a function $\varphi : A \to B$ is said to be a Freiman $s$-isomorphism for some $s \geq 1$ if, for all $a_i \in A$, 
\[ a_1+\cdots+a_s = a_{s+1}+\cdots+a_{2s} \Longleftrightarrow \varphi(a_1)+\cdots+\varphi(a_s) = \varphi(a_{s+1}) + \cdots + \varphi(a_{2s}). \]
Note that such a function is automatically a bijection. We remark again that an $s$-isomorphism $A \to B$ extends naturally to an $s/k$-isomorphism from $A \pm A \pm \cdots \pm A \to B \pm B \pm \cdots \pm B$ whenever $k$ divides $s$, where there are $k$ copies of the sets on each side and the same number of plusses and minuses on both sides.

\begin{proof}[Proof of Lemma \ref{lemma:model}]
Let $m$ be minimal such that $A$ is Freiman $s$-isomorphic to a subset $B$ of $G = \F_q^m$. Since $s \geq 2$, we still have that $\abs{B-B} \leq K\abs{B}$ or $\abs{B+B} \leq K\abs{B}$, whichever held for $A$.

Let $X = \F_q^\times \cdot (sB-sB) = \{ rx : r \in \F_q^\times, x \in sB-sB \}$. Assume for a contradiction that $X \neq G$, and let $x \in G \setminus X$. Denoting by $\langle x \rangle$ the subspace generated by $x$, note that $\langle x \rangle \cap X = \{0\}$, since $X$ is invariant under dilation. Now let $\varphi : G \to \F_q^{m-1}$ be a linear map with $\ker(\varphi) = \langle x \rangle$. We claim this restricts to an $s$-isomorphism from $B$ to $\varphi(B)$. Indeed, by linearity
\[ \varphi(a_1)+\cdots+\varphi(a_s) = \varphi(a_{s+1}) + \cdots + \varphi(a_{2s}) \iff a_1 + \cdots + a_s - a_{s+1} - \cdots - a_{2s} \in \ker(\varphi), \]
and since $(sB-sB)\cap \ker(\varphi) = \{0\}$ by construction, we have the claim. This contradicts the minimality of $m$, however, since the composition of two $s$-isomorphisms is again an $s$-isomorphism, and so
\[ \abs{G} = \abs{X} \leq (q-1)\abs{sB-sB} \leq (q-1)K^{2s}\abs{A}, \]
the final inequality following from the Pl\"unnecke--Ruzsa--Petridis inequality \cite[Corollary 6.29]{TV} (see \cite{Pe:2012} for an elegant proof). 
\end{proof}

We can now prove Corollary \ref{cor:PBR}, which we restate for convenience.

\PBR*
\begin{proof}
By translating if necessary, we assume without loss of generality that $A$ contains $0$. By the modelling lemma, we may embed $A$ into $G = \F_q^m$, where $\abs{G} \leq q K^8 \abs{A}$, by a $4$-isomorphism $\varphi$ taking $0$ to $0$.  Being a $4$-isomorphism, $\varphi$ extends to a $2$-isomorphism $A-A \to \varphi(A)-\varphi(A)$, whence $\abs{A-A} = \abs{\varphi(A) - \varphi(A)}$. Applying Theorem \ref{thm:PBRBohr} to $\varphi(A)$ in $G$, taking $S = -\varphi(A)$ in the bounds in the remark immediately following the theorem, we get that $\varphi(A)-\varphi(A)$ contains a subspace $V$ of $(A-A)$-codimension at most $Cd\log K + C\log q$. Hence $A-A$ contains $H \coloneqq \varphi^{-1}(V)$. Since $\varphi^{-1}$ is a $2$-isomorphism $V \to H$ taking $0$ to $0$, $H$ is also a subspace, and we are done.
\end{proof}

We remark that the $\log q$ term is of course somewhat artificial, coming from how the size of $4A-4A$ relates to a power of $q$ in the modelling lemma.

\section{VC-dimension and $k$-stability}\label{section:stability}
We make here some brief remarks about the relationship between our notion of VC-dimension and the notion of $k$-stability used by Terry and Wolf in \cite{TeWo:2017}, this being defined as follows.\footnote{Terry--Wolf used this definition restricted to the abelian setting; a general version appears in \cite{CoPiTe:2017}.}

\begin{defn}
Let $A$ be a subset of a group $G$. Then $A$ is said to have the \emph{$k$-order property} if there exist $a_1,\ldots,a_k, b_1, \ldots, b_k \in G$ such that $a_i b_j \in A$ if and only if $i \leq j$. If $A$ does not have the $k$-order property, it is called \emph{$k$-stable}.
\end{defn}

We first show that $k$-stable sets have small VC-dimension.
\begin{lemma}
Let $A \subset G$ be $k$-stable. Then $\vcd{A} \leq \vcd{A,G} \leq k-1$.
\end{lemma}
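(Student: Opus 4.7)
The first inequality $\vcd(A) \leq \vcd(G,A)$ is immediate from \Cref{prop:monotonicity}, so the plan is to establish $\vcd(G,A) \leq k-1$ under the hypothesis of $k$-stability. The natural approach is contrapositive: assume the family $\{xA : x \in G\}$ shatters some set $X = \{x_1, \ldots, x_k\}$ of size $k$, and extract from this shattering the $k$-order property, contradicting stability.

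Unpacking the shattering, for every subset $S \subset \{1, \ldots, k\}$ there exists $y_S \in G$ with $X \cap y_S A = \{x_i : i \in S\}$, or equivalently, $y_S^{-1} x_i \in A$ iff $i \in S$. The key observation is that one only needs $k$ of these $2^k$ witnesses: apply the shattering to the upward-closed segments $S_j = \{j, j+1, \ldots, k\}$ for $j = 1, \ldots, k$, producing witnesses $y_1, \ldots, y_k$ such that $y_j^{-1} x_i \in A$ iff $j \leq i$.

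Setting $a_j = y_j^{-1}$ and $b_j = x_j$ then gives $a_i b_j \in A$ iff $i \leq j$, which is exactly the $k$-order property and so contradicts $k$-stability. I do not anticipate a genuine obstacle here: the argument is essentially combinatorial bookkeeping, exploiting the fact that the $k$-order property describes just one specific pattern among the $2^k$ that a shattered $k$-set realises, namely the pattern cut out by the nested chain of upward segments.
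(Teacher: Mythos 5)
Your argument is correct and is essentially identical to the paper's: both prove the contrapositive by taking a shattered $k$-set, invoking the shattering only on the nested chain of upward-closed subsets $\{j,\ldots,k\}$, and reading off the $k$-order property from the resulting witnesses (with the same inversion $a_j = y_j^{-1}$ converting the translation witness into the required group element). The appeal to \Cref{prop:monotonicity} for $\vcd(A) \leq \vcd(G,A)$ also matches the paper's implicit use.
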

\begin{proof}
We show that if $\vcd{A,G} \geq k$, then $A$ has the $k$-order property. Let $X \subset G$ be a set of size $k$ shattered by the family $\{ xA : x \in G \}$. Writing $b_1, \ldots, b_k$ for the elements of $X$, let $a_1,\ldots,a_k \in G$ be elements such that
\[ (a_i^{-1}A) \cap X = \{ b_i,\ldots,b_k \}; \]
such elements exist by shattering. Then $a_i b_j \in A$ iff $b_j \in (a_i^{-1}A)\cap X$, which is true iff $i \leq j$ by definition of $a_i$. Thus the $k$-order property holds.
\end{proof}

In the other direction, no meaningful bound exists in general:

\begin{lemma}For every $k \geq 2$, there is a set $A$ with $\vcd{A} \leq 2$ that is not $k$-stable.
\end{lemma}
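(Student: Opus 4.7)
The plan is to exhibit an arithmetic progression as the example, using the fact that by Proposition \ref{prop:APs} any AP of length at least three in $\Z$ has VC-dimension exactly $2$. The remaining task is to check that a sufficiently long initial segment of $\N$ carries the $k$-order property. Concretely, I would take $A = \{0, 1, \ldots, N\} \subset \Z$ for any $N \geq k$, which immediately yields $\vcd(A) = 2$.

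For the $k$-order property, I would choose the witnesses $a_i = i$ and $b_j = N - j$ for $i, j \in \{1, \ldots, k\}$. Then $a_i + b_j = N + i - j$, which lies in $[0, N]$ precisely when $i \leq j$: the upper inequality $N + i - j \leq N$ is equivalent to $i \leq j$, while the lower inequality $N + i - j \geq 0$ reduces to $j \leq N + i$ and is automatic given $j \leq k \leq N$. This realises the $k$-order property inside $A$, so $A$ is not $k$-stable.

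The argument is essentially a direct construction with no substantive obstacle; the only care needed is to pick $N$ large enough that the lower endpoint of $A$ imposes no additional constraint on the witnesses. If one prefers a finite ambient group, the same construction transports to $\Z/M\Z$ for any $M > 2N$ via the obvious Freiman $2$-isomorphism, preserving both the VC-dimension (by Lemma \ref{lemma:Freiman}) and the order property.
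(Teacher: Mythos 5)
Your proof is correct and takes essentially the same approach as the paper: exhibit an arithmetic progression, cite Proposition~\ref{prop:APs} for the VC-dimension bound, and produce explicit witnesses $a_i$, $b_j$ for the $k$-order property. The paper uses $A=[0,k)$ with $a_i=i-1$, $b_j=k-j$ and has to note separately that $\vcd(A)=1$ when $k=2$; by taking $N\geq k$ you sidestep that edge case since $\abs{A}\geq 3$ always, which is a tidy (if minor) simplification.
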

\begin{proof}
The arithmetic progression $A \coloneqq [0,k) \subset \Z$ has the $k$-order property, taking $a_i = i-1$ and $b_j = k-j$ for $i,j=1,\ldots,k$. On the other hand, by Proposition \ref{prop:APs}, $\vcd{A} = 2$ for $k \geq 3$, and equals $1$ if $k = 2$.
\end{proof}

When it comes to non-abelian groups, Conant, Pillay and Terry \cite{CoPiTe:2017} recently proved the following structurally strong result about stable sets.

\begin{theorem}[Conant--Pillay--Terry]\label{thm:CPT}
For any $k \geq 1$ and $\epsilon > 0$, there are $n = n(k,\epsilon)$ and $N = N(k,\epsilon)$ such that the following holds. Suppose $G$ is a finite group of size at least $N$, and $A \subset G$ is $k$-stable. Then there is a normal subgroup $H \leq G$, of index at most $n$, such that for each coset $C$ of $H$ either $\abs{A \cap C} \leq \epsilon \abs{H}$ or $\abs{A \cap C} \geq (1-\epsilon)\abs{H}$. Moreover, there is a union $W$ of cosets of $H$ such that $\abs{A \symmdiff W} \leq \epsilon \abs{H}$.
\end{theorem}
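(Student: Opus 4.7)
The plan is to proceed by the compactness/ultraproduct method from model theory, following the approach of \cite{CoPiTe:2017}. Suppose for contradiction that the theorem fails for some fixed $k$ and $\epsilon$: then for each $i \in \N$ there exist a finite group $G_i$ of size at least $i$ and a $k$-stable subset $A_i \subset G_i$ violating the conclusion with $n = i$. Fixing a nonprincipal ultrafilter $\mathcal{U}$ on $\N$, I would form the ultraproduct $G \coloneqq \prod_i G_i / \mathcal{U}$ with distinguished internal subset $A \coloneqq \prod_i A_i / \mathcal{U}$. By {\L}o\'{s}'s theorem the formula $\varphi(x,y) \coloneqq xy \in A$ inherits $k$-stability from the $A_i$, so $\varphi$ is a stable formula in the sense of Shelah. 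In addition, $G$ carries a canonical finitely additive pseudofinite Keisler measure $\mu$, obtained as the ultralimit of the normalised counting measures on the $G_i$.

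The heart of the argument is then to invoke the structure theorem from stable group theory: since $\varphi$ is stable, one obtains a type-definable normal subgroup $G^{00} \trianglelefteq G$ of bounded index such that the quotient $L \coloneqq G/G^{00}$ is a compact Hausdorff group on which $\mu$ projects to Haar measure. Stability of $\varphi$ further yields the key pointwise fact that on \emph{every} coset $gG^{00}$ the relative measure $\mu(A \cap gG^{00})/\mu(gG^{00})$ takes only the values $0$ and $1$, and hence that $A$ coincides $\mu$-almost everywhere with a specific union of cosets of $G^{00}$. The normality of $G^{00}$ in the non-abelian setting is itself a consequence of stability, via the coincidence of the various type-definable bounded-index subgroups, and the coset-by-coset uniformity relies on a definability/smoothness property of stable Keisler measures, roughly that the subset of full cosets inside $L$ is clopen.

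To descend back to the finite groups I would apply compactness together with {\L}o\'{s}'s theorem: the conclusion extracted in $G$ --- that there is a bounded-index normal subgroup modulo which $A$ is pointwise $0/1$-distributed across cosets --- is approximable by a first-order scheme with finitely many parameters governing the index $n$ and the tolerance $\epsilon$. This scheme therefore holds in $G_i$ for $\mathcal{U}$-almost every $i$, producing, for all sufficiently large $i$, a normal subgroup $H_i \leq G_i$ of index at most $n = n(k,\epsilon)$ on each coset of which $A_i$ is either $\epsilon$-sparse or $\epsilon$-dense, contradicting our choice of counterexamples.

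The main obstacle is the middle step, specifically the requirement that the $0/1$-dichotomy hold on \emph{every} coset rather than merely on almost every coset. The averaged statement is comparatively routine once Haar measure on $L$ is in hand, but the pointwise uniform statement requires that the concentration locus of $\mu$ be clopen in $L$, not merely Haar-measurable. This is the additional structural information that stability provides over and above bounded VC-dimension: stable Keisler measures are smooth in a way that general VC-measures are not, and it is this extra strength which both forces $H$ to be normal in the finite setting and prevents the argument from yielding effective bounds on $n(k,\epsilon)$ --- the appeal to ultraproducts and saturation is what unlocks the smoothness property, at the cost of quantitative control.
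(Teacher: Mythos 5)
Note first that the paper does not give a proof of this theorem: it is quoted verbatim from Conant, Pillay, and Terry \cite{CoPiTe:2017} as background, so there is no ``paper's own proof'' to compare against --- the relevant comparison is with the argument in \cite{CoPiTe:2017}. Your high-level outline (ultraproduct with pseudofinite Keisler measure, transfer of $k$-stability via \L o\'s, connected component of bounded index, $0/1$-dichotomy on cosets from stability, descent by compactness) is indeed the shape of the CPT argument, so the approach is the right one.

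That said, the sketch defers the theorem's essential content to a generic appeal to ``smoothness,'' and a couple of points need to be made precise before the argument closes. First, bounded index is not enough: to recover a finite-index normal subgroup $H_i \leq G_i$ in the finite groups, you need the relevant type-definable subgroup in the ultraproduct to have \emph{finite} index, and this is exactly where the distinction between stable and merely NIP formulas matters --- for a stable formula $\varphi$ the local $\varphi$-connected component has finite index, whereas in general $G/G^{00}$ can be an infinite compact group such as a torus. Your phrasing (``compact Hausdorff group $L \coloneqq G/G^{00}$'') leaves the infinite possibility open, and this would wreck the descent. Second, and relatedly, the pointwise dichotomy $\mu(A \cap gG^{00})/\mu(gG^{00}) \in \{0,1\}$ is not even well-formed until the finite-index fact is in place, since otherwise the cosets have measure zero. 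Third, the descent ``by a first-order scheme'' skips the genuinely fiddly part of CPT's argument: one needs the subgroup produced in the ultraproduct to be definable with uniformly bounded complexity (so that \L o\'s applies to its defining formula), and one needs the error sets $A \symmdiff W$ to be controlled definably as well. None of this means the approach is wrong --- it is exactly what CPT carry out --- but as written the sketch assumes the stability-specific structure theory rather than deriving it, so the places where $k$-stability (as opposed to mere bounded VC-dimension) enters are precisely the places left unproved.
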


This is another indication of a fundamental difference between $k$-stability and bounded VC-dimension: in groups of prime order, there are sets with bounded VC-dimension of any possible size, whereas the above theorem shows that $k$-stable sets in such groups are necessarily either very small or very large.

\begin{remark}\label{rmk:translateControlCPT}
In the case of stability, both the results of Terry--Wolf \cite{TeWo:2017} and Conant--Pillay--Terry \cite{CoPiTe:2017} show that for each $t \in A+H$, $\abs{A \cap (t+H)}$ is either very large or very small. (For $t \notin A+H$, the intersection is empty.) In the VC-bounded case, Theorem \ref{thm:ARBohr} shows that there is a Bohr set $H$, a dilate $H'$ thereof and an almost-full subset $A' \subset A$ for which the intersections $A\cap(t+H')$ are large for all $t \in A'+H$. For $t \in (A+H')\setminus (A'+H)$, however, it says nothing; the case of arithmetic progressions indicates that some of the intersections can be of medium size --- thus one cannot in general hope to obtain the same kind of dichotomy.
\end{remark}

\begin{remark}
As noted in the introduction, around the same time as this work, Conant--Pillay--Terry \cite{CoPiTe:2018} proved a version of Theorem \ref{thm:CPT} for sets with bounded VC-dimension that shows what the regularity conclusion can look like for general (not necessarily abelian) groups; see \cite[Theorem 5.7]{CoPiTe:2018}. A subsequent paper of Conant--Pillay \cite{CoPi:2020} reaches even further, assuming only a small tripling condition rather than a density condition; see \cite[Theorem 2.1]{CoPi:2020}. Let us also note that in the abelian setting, using in part methods from the current paper, Terry--Wolf \cite{TeWo:2018} have proved a strong regularity lemma for stable sets in abelian groups; see \cite[Theorem 3]{TeWo:2018}.
\end{remark}

\section{Improving the $\epsilon$-dependence}\label{section:aftermath}
Around the same time as our work, Alon, Fox and Zhao \cite{AFZ:2018} proved using different methods an arithmetic regularity lemma for sets $A$ where the family $\{ t + A : t \in G \}$ has small VC-dimension, for abelian groups with bounded exponent. In our notation, this notion of VC-dimension corresponds precisely to $\vcd{A,G}$. This notion was also looked at earlier by P. Simon \cite{Sim:2017} in the context of locally compact groups, with motivation coming from model theory. An example of where this notion is different to our definition of $\vcd{A}$ is for $A$ being a coset of a proper subgroup of $G$: then $\vcd{A,G} = 1$, whereas $\vcd{A} = 0$, by Proposition \ref{prop:vcd0}. In fact, differing by $1$ is as bad as the difference can get, by Proposition \ref{prop:monotonicity}. We remark that in general $\vcd{A,B}$ can be very different to $\vcd{A}$, however.

As mentioned in the introduction, the approach of Alon--Fox--Zhao gives a superior $\epsilon$-dependence in the arithmetic regularity lemma for bounded exponent groups. Using a variant of their very interesting lemma \cite[Lemma 2.2]{AFZ:2018}, which is based on Haussler's discrete sphere packing lemma \cite{Haussler}, one can in fact improve the $\epsilon$-dependence from polynomial to logarithmic in Theorem \ref{thm:mainAbelian}, at least in the case $B=A$. The result one obtains is the following:

\begin{theorem}\label{thm:HausslerCty}
Let $\epsilon \in (0,1]$. Let $G$ be a finite abelian group and suppose $A \subset G$ is a subset with $\abs{A} \geq \alpha \abs{G}$ and $\vcd{A,G} \leq d$. Then there is a regular Bohr set $T$ of rank $m \leq C d \log(2/\epsilon\alpha)$ and radius at least $c\epsilon/m$ such that, for each $t \in T$,
\[ \norm{\mu_A* 1_{-A}(\cdot + t) - \mu_A* 1_{-A}}_\infty \leq \epsilon. \]
\end{theorem}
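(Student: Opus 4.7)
The plan is to reuse the two-step structure of the proof of Theorem \ref{thm:mainAbelian} --- a pointwise $L^\infty$-almost-periodicity statement, followed by a Fourier-analytic bootstrap via Chang's lemma --- but to replace the probabilistic empirical-process input to Theorem \ref{thm:mainSum} by a deterministic covering argument based on Lemma 2.2 of \cite{AFZ:2018}, an additive consequence of Haussler's discrete sphere-packing lemma. The restriction to the $B=A$ case is what makes $\|\mu_A\|_2\|1_A\|_2 = 1$, so that the Fourier tail estimate in the bootstrap needs only $k = O(\log(1/\epsilon))$ convolutions of $\mu_T$.

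Concretely, applied to the family $\{x + A : x \in G\}$ --- which has VC-dimension at most $d+1$ by \Cref{prop:monotonicity} --- Haussler packing with respect to the uniform measure on $G$ and parameter $\epsilon\alpha$ yields translates $t_1, \ldots, t_N \in G$ with $N \leq (C/\epsilon\alpha)^{d+1}$ such that for every $t \in G$ some $t_i$ satisfies $\abs{(t+A) \symmdiff (t_i + A)} \leq \epsilon\alpha \abs{G} \leq \epsilon\abs{A}$. Define the $\epsilon$-almost-stabilizer $T = \{s \in G : \abs{A \symmdiff (A+s)} \leq \epsilon\abs{A}\}$; the net property gives $t - t_i \in T$, so $G = \bigcup_i (t_i + T)$ and hence $\mu_G(T) \geq 1/N \geq (c\epsilon\alpha)^{d+1}$. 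Every $s \in T$ is an $\epsilon$-almost-period of $\mu_A \circ 1_A$ in $L^\infty$, since for each $x \in G$ one has
\[ \abs{\mu_A\circ 1_A(x+s) - \mu_A \circ 1_A(x)} = \abs{A}^{-1}\bigl|\abs{A \cap (x+s+A)} - \abs{A \cap (x+A)}\bigr| \leq \abs{A}^{-1}\abs{A \symmdiff (A+s)} \leq \epsilon. \]

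For the bootstrap, we follow the proof of Theorem \ref{thm:mainAbelian} verbatim: convolve $\mu_A \circ 1_A$ with $\mu_T^{(k)} \circ \mu_T^{(k)}$ for $k = C\log(2/\epsilon)$. The Fourier tail outside the $1/2$-large spectrum $\Gamma$ of $\mu_T$ contributes at most $2^{-2k+1}\norm{\mu_A}_2\norm{1_A}_2 = O(\epsilon)$ in $L^\infty$, using that $B = A$. Chang's lemma then places $\Gamma$ inside the $\{-1,0,1\}$-span of at most $C\log(1/\mu_G(T)) \leq Cd \log(2/\epsilon\alpha)$ characters, producing a regular Bohr set of rank $m \leq Cd\log(2/\epsilon\alpha)$ and radius $c\epsilon/m$ (dilated if needed to ensure regularity), every element of which is an $\epsilon$-almost-period.

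The main obstacle is the very first step: replacing the random $n = Cd/\epsilon^2$-sample argument from the proof of Theorem \ref{thm:mainSum} by the deterministic extraction of $T$ from the AFZ net. The key observation enabling this is that taking the packing parameter to be $\epsilon\alpha$ (rather than $\epsilon$) converts the $L^1(\mu_G)$ approximation delivered by the net into an $L^\infty$-level statement about $\mu_A \circ 1_A$, at the cost of only an additive $\log(1/\alpha)$ inside the logarithm; this is exactly what eliminates the $\epsilon^{-2}$ factor coming from uniform convergence and keeps the dependence on $\epsilon$ and $\alpha$ linear in the logarithm.
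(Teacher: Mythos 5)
Your strategy is precisely what the paper sketches in Section \ref{section:aftermath}: replace the random-sampling step behind Theorem \ref{thm:mainSum} by a deterministic Haussler/AFZ covering argument, and then run the Fourier/Chang bootstrap from the proof of Theorem \ref{thm:mainAbelian}. The first half is correct: the translate family $\{x+A : x\in G\}$ has VC-dimension $\vcd(G,A)\leq d+1$ by Proposition \ref{prop:monotonicity}, a maximal $\epsilon\alpha$-separated subfamily with respect to $\mu_G$ is an $\epsilon\alpha$-cover of size $\leq (C/\epsilon\alpha)^{d+1}$, the covering relation $G=\bigcup_i (t_i+T)$ with $T$ the $\epsilon$-almost-stabilizer follows, and the pointwise inequality $\abs{\mu_A\circ 1_A(x+s)-\mu_A\circ 1_A(x)}\leq\abs{A}^{-1}\abs{A\symmdiff(A+s)}$ holds for every $x$. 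This is, to all appearances, the argument the paper intends.

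There is, however, a bookkeeping gap in the bootstrap. The averaging step in the proof of Theorem \ref{thm:mainAbelian}, namely $\norm{\mu_A\circ 1_B*\mu-\mu_A\circ 1_B}_\infty\leq\epsilon/3$ with $\mu=\mu_T^{(k)}\circ\mu_T^{(k)}$, is an average over the \emph{support} of $\mu$, which is $kT-kT$; it therefore requires every $t\in kT-kT$ to be an $\epsilon/3$-almost-period, and this is exactly why Theorem \ref{thm:mainSum} was engineered to hand over almost-periods on $(T^{-1}T)^k$. Your $T$ supplies $\epsilon$-almost-periods only on $T$ itself, and the triangle inequality degrades this to $2k\epsilon$-almost-periods on $kT-kT$. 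Following the bootstrap ``verbatim'' thus yields final almost-periods of quality $O(\epsilon\log(1/\epsilon))$, not $\epsilon$; the displayed Fourier-tail estimate is fine but is only one of the two terms in inequality \eqref{eqn:fourierTriangle}. The fix is routine: take the Haussler parameter to be $\epsilon\alpha/(Ck)$ rather than $\epsilon\alpha$ (with $k=C\log(2/\epsilon)$ fixed first), so that $T$ is the $(\epsilon/Ck)$-almost-stabilizer. Then $\mu_G(T)\geq(c\epsilon\alpha/k)^{d+1}$, so $\log(1/\mu_G(T))\leq C(d+1)\log(Ck/\epsilon\alpha)$, and since $\log k\leq C + \log(2/\epsilon)\leq C\log(2/\epsilon\alpha)$ this is still $\leq Cd\log(2/\epsilon\alpha)$. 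The stated rank and radius bounds survive intact, but the proof as written needs this adjustment to actually deliver $\epsilon$-almost-periods.
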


In particular, this means that the codimension and rank bounds in Theorems \ref{thm:AR}, \ref{thm:ARBohrSimple} and \ref{thm:ARBohr} can all be improved to $Cd\log(2/\epsilon\alpha)$, and in Theorem \ref{thm:ARdoubling}, the codimension can be taken to be $C d \log(2K/\epsilon)  + C\log q$ (by a slight generalisation of the above).

We remark that one can improve this even further in the case of groups of bounded exponent, but \cite{AFZ:2018} is already very efficient in the bounded exponent setting. The main advantage of the methods of this paper is that they apply equally strongly to the `characteristic 0' setting.

As in the proof of Theorem \ref{thm:mainAbelian}, we prove the above result using the following analogue of Theorem \ref{thm:mainSum}.% Theorem \ref{thm:HausslerCty} then follows using precisely the same Fourier-bootstrapping method as before.

\begin{theorem}\label{thm:mainSumHaussler}
Let $\epsilon \in (0,1]$ and $d, k \in \N$. Let $G$ be a group and let $B \subset G$ be a finite subset with $\vcd{B,G} \leq d$. If $\abs{S \cdot B} \leq K\abs{B}$ for some set $S \subset G$, then there is a set $T \subset S$ of size at least $(c\epsilon/Kk)^d \abs{S}$ such that, for any set $A \subset G$ and each $t \in (T^{-1} T)^k$,
\[ \norm{ \tau_t(\mu_B* 1_{A^{-1}}) - \mu_B* 1_{A^{-1}}}_\infty \leq \epsilon. \]
\end{theorem}

This has a better $\epsilon/k$-dependence than Theorem \ref{thm:mainSum}, but note that the hypotheses are different: the VC-dimension bound is placed on $B$ here, not $A$, and is relative to the whole group rather than a subset. Thus, in this version, both the VC-bound and the small sumset condition are placed on the same set, whereas in Theorem \ref{thm:mainSum} they need not be coupled. When $B=A$, the case used for the proof of the arithmetic regularity lemmas, there is of course no distinction. The theorem actually follows immediately from the following $L^1$-statement, which is a very slight variant of Alon--Fox--Zhao's \cite[Lemma 2.2]{AFZ:2018}:

\begin{theorem}\label{thm:mainSumHausslerL1}
Let $\epsilon \in (0,1]$ and $d, k \in \N$. Let $G$ be a group and let $B \subset G$ be a finite subset with $\vcd{B,G} \leq d$. If $\abs{S \cdot B} \leq K\abs{B}$ for some set $S \subset G$, then there is a set $T \subset S$ of size at least $(c\epsilon/Kk)^d \abs{S}$ such that, for each $t \in (T^{-1} T)^k$,
\[ \norm{ \tau_t(\mu_B) - \mu_B}_1 \leq \epsilon. \]
\end{theorem}

In order to prove this, following \cite{AFZ:2018}, we require the following definition and lemma of Haussler. We say that a family $\mathcal{F}$ of sets on a ground set $X$ is $\delta$-separated if $\abs{U \symmdiff V} \geq \delta \abs{X}$ for any distinct $U, V \in \mathcal{F}$. The aforementioned sphere packing lemma \cite[Theorem 1]{Haussler} of Haussler implies the following bound on such families with bounded VC-dimension. 

\begin{lemma}\label{lemma:Haussler}
Let $\delta > 0$ and let $\mathcal{F}$ be a family of $\delta$-separated sets on a ground set $X$, with $\vcd{\mathcal{F}} = d$. Then $\abs{\mathcal{F}} \leq (C/\delta)^d$. 
\end{lemma}

\begin{proof}[Proof of Theorem \ref{thm:mainSumHausslerL1}]
The argument is a slight variant of the proof of \cite[Lemma 2.2]{AFZ:2018}. Let $\mathcal{B} = \{ xB : x \in S\}$ be a family of sets on the ground set $S\cdot B$, and note that $\vcd{\mathcal{B}} \leq \vcd{B,G} \leq d$. Let $\delta > 0$ be a parameter to be specified later, and let $U \subset S$ be maximal such that the collection $\{ xB : x \in U \}$ is $\delta$-separated. By Lemma \ref{lemma:Haussler}, $\abs{U} \leq (C/\delta)^d$. By maximality, for any $y \in S$, there is some $u \in U$ such that $\abs{yB \symmdiff uB} \leq \delta \abs{S\cdot B}$. Writing
\[ T_0 = \{ t \in G : \abs{tB \symmdiff B} \leq \delta \abs{S\cdot B} \}, \]
this means by translation-invariance that for any $y \in S$, there is some $u \in U$ such that $y \in uT_0$. Thus $S \subset \cup_{u \in U} (uT_0 \cap S)$, and so there is some $u \in U$ for which the set $T \coloneqq u T_0 \cap S$ has size
\[ \abs{T} \geq (c\delta)^d \abs{S}. \]
It remains to show that for an appropriate choice of $\delta$, this set $T$ satisfies the conclusion of the theorem. For this, note that if $t \in T^{-1}T$, then we can write $t = (u t_1)^{-1}(u t_2) = t_1^{-1}t_2$ for some $t_i \in T_0$. Thus
\begin{align*}
\norm{ \tau_t(1_B) - 1_B }_1 &= \abs{t^{-1} B \symmdiff B } \\
&= \abs{ t_1 B \symmdiff t_2 B} \\
&\leq \abs{t_1 B \symmdiff B} + \abs{t_2 B \symmdiff B} \\
&\leq 2\delta\abs{S \cdot B}.
\end{align*}
We pick $\delta = \epsilon/2Kk$, so that this is at most $(\epsilon/k)\abs{B}$. By the triangle inequality, we then have that for any $t \in (T^{-1}T)^k$, 
\[ \norm{ \tau_t(1_B) - 1_B}_1 \leq \epsilon\abs{B}, \]
as required.
\end{proof}

\begin{proof}[Proof of Theorem \ref{thm:mainSumHaussler}]
This follows immediately from Theorem \ref{thm:mainSumHausslerL1}, as, for any $x \in G$ and $t \in (T^{-1}T)^k$,
\begin{align*}
\abs{ \mu_B* 1_{A^{-1}}(tx) - \mu_B* 1_{A^{-1}}(x) } &= \abs{ \mu_{t^{-1}B}*1_{A^{-1}}(x) - \mu_B*1_{A^{-1}}(x) }, \\
&= \abs{ \sum_{y \in G} (\mu_{t^{-1}B}(y) - \mu_B(y))1_{A^{-1}}(y^{-1}x) } \\
&\leq \norm{ \tau_t(\mu_{B}) - \mu_B }_1.\qedhere
\end{align*}
\end{proof}

\begin{proof}[Proof Theorem \ref{thm:HausslerCty}]
The proof follows the proof of Theorem \ref{thm:mainAbelian} exactly, only substituting Theorem \ref{thm:mainSumHaussler} in place of Theorem \ref{thm:mainSum}. 
\end{proof}

\section{Conclusion}\label{section:conclusion}

\subsection*{Other notions of dimension}
Most of the results of this paper would be valid under assumptions more general than VC-dimension being small, such as low primal shatter dimension, low metric entropy, or low Rademacher complexity. Indeed, versions of Theorem \ref{thm:uniformEmp} hold with such assumptions in place of bounded VC-dimension, and this was the only place VC-dimension was used in the proofs of the main results (prior to Section \ref{section:aftermath}). VC-dimension seems to be the most widely studied notion, however, so we have chosen to phrase the results in terms of this. In a similar vein, it would also be natural to consider functions more general than indicator functions in the continuity results, using the flexibility of these more general assumptions.

\subsection*{The locally compact setting}
Theorem \ref{thm:mainSum} should generalise readily to the setting of second countable locally compact groups. The only required changes are to replace each reference to cardinality by (left) Haar measure instead, and to add measurability conditions.
% Second countable and locally compact implies $\sigma$-compact, and $\sigma$-compact implies that the group is $\sigma$-finite under Haar measure. Hence we can apply Fubini--Tonelli.

\subsection*{Questions}
Much remains to be investigated surrounding this notion of VC-dimension. For example, although we know that Bohr sets have bounded VC-dimension, and we obtain our characterisation in terms of approximations by Bohr sets, it is not clear that one cannot prove a much stronger classification if one allows other types of sets. Determining an appropriate family that is both necessary and sufficient would be interesting.

On the additive side, the following seem to be natural questions.

What is the likely VC-dimension of a random subset of $G$ of size $m$?

If $A \subset \F_2^n$ has density at least $0.49$, must there be a dense subset $B \subset A$ with $\vcd{B} = o(n)$? (One might want to impose some conditions on $A+A$ here.)

If $A \subset \F_2^n$ has density at least $\alpha$, must $2A-2A$ contain a subset of bounded VC-dimension and size at least $\alpha^{O(1)}\abs{A}$?

The following question arose with Sean Prendiville: given $d$, what is the largest size of a 3AP-free subset $A$ of a finite abelian group $G$ (or $[N]$) if $\vcd{A} \leq d$? Being 3AP-free here means that the set does not contain any non-trivial three-term arithmetic progressions, that is solutions to $x+z=2y$ with $x,y,z$ distinct elements of $A$. Chapman \cite{Chapman} also observed Proposition \ref{prop:Sidon} and noted that Sidon sets thus provide examples of somewhat large 3AP-free sets for $d=2$. Is $O(N^{1/2})$ the correct answer for $d=2$? For larger $d$, it seems likely that the Behrend construction can be used to provide larger examples; whether this is close to optimal or not is completely unclear.

%%% AUTHOR: optional acknowledgments here
\section*{Acknowledgments}
This work was partly carried out at the Pseudorandomness programme at the Simons Institute for the Theory of Computing, whose support is gratefully acknowledged. The author would like to thank Thomas Bloom and Caroline Terry for helpful conversations about VC-dimension, and the referee for numerous suggestions that led to improvements to the paper, and in particular for supplying the simplified proof of the modelling lemma, Lemma \ref{lemma:model}.

%%% AUTHOR:
%%% Bibliography goes here. Note that the arXiv cannot process bibtex
%%% or biber bibliographies.  Example of acceptable bibliograpy format:
\bibliographystyle{amsplain}

%% AUTHOR: You can generate such a bibliography from a .bib file by 
%% running pdflatex/bibtex/pdflatex/pdflatex and then pasting the .bbl file
%% between \begin{thebibliography} and \end{bibliography}

%%% AUTHOR: Include a short description of each author following the
%%% structure below. Use the same short tags used previously.  
%%% Use \imageat{} and \imagedot{} instead of "@" and "." in
%%% email addresses-this replaces the symbols with graphics to avoid 
%%% e-mail address harvesting from the .pdf file
\begin{dajauthors}
\begin{authorinfo}[olof]
  Olof Sisask\\
  Department of Mathematics\\
  Stockholm University\\
  SE-106 91 Stockholm\\
  Sweden\\
  olof.sisask\imageat{}math\imagedot{}su\imagedot{}se \\
  \url{https://www.sisask.com}
\end{authorinfo}
\end{dajauthors}

\end{document}